\newtheorem{fact}{fact}
\newtheorem{thm}[fact]{Theorem}
\newtheorem{prop}[fact]{Proposition}
\newtheorem{defini}[fact]{Definition}
\begin{document}
\title{Lower Bounds on $\beta(\alpha)$ and other properties of $\alpha$-register machines}
%
%
\author{Merlin Carl\inst{1}}
\authorrunning{M. Carl}
%
\institute{Institut f\"ur mathematische, naturwissenschaftliche und technische Bildung, Abteilung f\"ur 
Mathematik und ihre Didaktik, Europa-Universit\"at Flensburg, Germany 
\email{merlin.carl@uni-flensburg.de}}
\maketitle              
\begin{abstract} 
This paper extends our paper \cite{C2} for the conference ``Computability in Europe'' 2022. 

After Infinite Time Turing Machines (ITTM) were introduced in Hamkins and Lewis \cite{HL}, a number of machine models of computability have been generalized to the transfinite, along with various variants thereof. While for some of these models the computational strength has been successfully determined, there are still several white spots on the map of transfinite computability. In this paper, we 
contribute to the understanding of the computational strength of transfinite machine models by (i) proving lower bounds on the computational strength of $\alpha$-Infinite Time Register Machines ($\alpha$-ITRMs) for certain values of $\alpha$, refuting a conjecture about their strength made in \cite{alpha itrms}, 
(ii) showing that the computational strength of cardinal-recognizing ITRMs is equal to that of ITRMs and 
(iii) showing that non-solvability of the bounded halting problem, existence of a universal machine and an increase of computational power by allowing machines to recognize cardinals are equivalent for $\alpha$-ITRMs for all relevant values of $\alpha$ .


Finally, we given some results indicating how the picture changes when the use of parameters is dropped or restricted. 


\keywords{Ordinal Computability  \and Infinite Time Register Machines \and Gandy ordinals}
\end{abstract}
%
%
%



\section{Introduction}

Ordinal computability studies generalizations of models of computability to the transfinite, thereby connecting set theory (in particular descriptive set theory and constructibility) and computability theory. The models of computability studied in ordinal computability include 
Infinite Time Turing Machines (ITTMs) (Hamkins and Lewis, \cite{HL}), Ordinal Turing Machines (OTMs) (Koepke, \cite{K}), $\alpha$-Turing Machines (Koepke and Seyfferth, \cite{KS}), $\alpha$-ITTMs (Koepke, \cite{K1}), $(\alpha,\beta)$-ITTMs (\cite{KS}, \cite{COW}), 
weak and strong Infinite Time Register Machines (Koepke \cite{K}, Koepke and Miller \cite{KM}), Ordinal Register Machines (Koepke and Syders \cite{KSy}), $\alpha$-(w)ITRMs (Koepke, \cite{K1}), $(\alpha,\beta)$-(w)ITRMs (ibid.), Infinite Time Blum-Shub-Smale-Machines (Koepke and Seyfferth, \cite{KS1}), Surreal Blum-Shub-Smale-Machines (Galeotti and Nobrega, \cite{GN}), Ordinal $\lambda$-Calculus (Fischbach and Seyfferth, \cite{FS}) and Deterministic Ordinal Automata (\cite{C5}). Given this great diversity of models, one might be excused to utter the objection that, in contrast to classical computability theory with Turing computability as its central notion, this is a zoo rather than a model, and the area lacks coherence, thus significantly reducing the interest of results about such models. 



Counter to this view, we offer the following perspective: Among the ``maximal'' models, with fully ordinalized ressources, tape models, register models, $\lambda$-calculus etc. all lead to the same notion of computability, which coincides with constructibility (see, e.g., Fischbach \cite{F}). The other models should be viewed as resource-bounded versions of this one, stable notion of computability, arising, e.g., by restricting the available time or the available space.\footnote{But also in other ways, for example, by stipulating that the content of a tape cell may only change finitely often.} They are then not analogous to Turing machines, but to complexity classes, which indeed form a ``zoo'' in the classical setting, albeit one very worthy of study. The main difference to the finitary case is that, in the transfinite, constant resource bounds, such as restricting the tape length to $\omega$, lead to interesting and stabel notions. But this is a feature, rather than a bug, of ordinal computability. 

Another point of critique is that ordinal computability is focused too much on its different models rather than general topics of computability: one should rather look at concepts, rather than machines. To this, we reply that the growing literature on ordinal computability contains works, for example, on transfinite versions of degree theory (\cite{HL1}, \cite{W2}), algorithmic randomness (\cite{CS}, \cite{AM}), computable model theory \cite{HMSW}, Weihrauch reducibility \cite{C3}, \cite{GN}, complexity theory (\cite{DHS}, \cite{C4}) and realizability (\cite{CGP}); and that it has fruitfully interacted with such areas as constructibility theory \cite{K2}, descriptive set theory (\cite{CH}, \cite{CSW2}) and, recently, proof theory (\cite{CGP}, \cite{P}). Thus, there is no shortage of conceptual work, applications and interactions.\footnote{See also \cite{C}, chapter $8$.} 

Still, the ``zoo'' of models, which has now been around for about 15 years, leaves us with several challenging open problems. The computational strength of Turing machines with tape length $\alpha$ was considered in \cite{R}, \cite{CRS} and, with time additionally restricted to $\beta$, determined in \cite{COW}. One benefit of such research is that it often leads (i) to new characterizations of known types of ordinals and classes (for example, by Koepke \cite{K}, the hyperarithmetic real numbers are exactly those computable by a wITRM; recently, the ordinal $\gamma^{1}_{2}$ defined by Kechris, Marker and Sami was characterized as the supremum of the countable halting time bounds of ITTMs that semidecide some set of real numbers, \cite{CSW1}, \cite{CSW2}) and (ii) to new classes and types of ordinals, such as the ordinals $\lambda$, $\zeta$ and $\Sigma$ introduced by Welch in his analysis of ITTMs or the class of ITTM-decidable sets of real numbers. However, as we will see in the next section, for the register models, large spots on the map are still white.

The aim of this paper, then, is to contribute to the classification of models of transfinite computability by their computational strength. 







The section \ref{iteration section} on iterating $\alpha$-computable operators, along with a part of the introduction (in particular, the next one) and some of the open questions, are taken from the CiE 2022 conference paper \cite{C2}. The rest of the material, unless indicated otherwise, is an original contribution of this paper. 

\subsection{Register Models of Transfinite Computability}



In \cite{K1}, Koepke introduced resetting $\alpha$-Infinite Time Register Machines, abbreviated $\alpha$-ITRMs; an extensive discussion can be found in \cite{C}. Such machines have finitely many registers, each of which can store a single ordinal smaller than $\alpha$. Programs for $\alpha$-ITRMs are just programs for classical register machines 
as introduced, e.g., in Cutland \cite{Cu} and consist of finitely many enumerated program lines, each of which contains one of the following commands: (i) an incrementation operation, which increases the content of some register by $1$, (ii) a copy instruction, which replaces the content of one register 
by that of another, (iii) a conditional jump, which changes the active program line to a certain value when the contents of two finite sequences of registers\footnote{That we allow the simultaneous comparison of two finite sequences, rather than just two, registers, has again technical advantages explained in \cite{alpha itrms}, p. 2} are equal and otherwise proceeds with the next program line, (iv) an oracle command, which checks whether the content of some register is contained in the oracle and changes 
the content of that register to $1$ if that is the case and otherwise to $0$.\footnote{Note that the ``reset'' command for replacing the content of a register by $0$ can be carried out by having a register with value $0$ and using the copy instruction; for this reason, it is not included here, in contrast to the account in \cite{K1}.} 
For technical reasons, we start the enumeration of the program lines with $1$ rather than $0$. 

The computation of an $\alpha$-ITRM then works as follows: At successor stages, we simply carry out the program as we would in a classical (finite) register machine.\footnote{If $\alpha$ is a successor ordinal, the incrementation operation may lead to the register content $\alpha$; in that case, the content is replaced by $0$. However, we will be mostly concerned with limit values of $\alpha$ in this paper.} At limit stages, the content of each register is the inferior limit of the sequence of earlier contents of this register; if this happens to be $\alpha$, we say that the register ``overflows'' and set its content to $0$. The active program line is just the inferior limit of the sequence of earlier active program lines. In the case $\alpha=\omega$, one drops the prefix and merely speaks of ITRMs, which have been studied in detail (\cite{CFKMNW}, \cite{K1}, \cite{KM}).

 There is also a weaker model for register computations on $\alpha$, known as ``weak'' or ``unresetting'' $\alpha$-ITRMs, abbreviated $\alpha$-wITRMs. These differ from $\alpha$-ITRMs in that a register overflow -- i.e., if, due to a limit operation, or (if $\alpha$ is a successor ordinal) due to an incrementation step -- has the consequence that the computation is not defined. 
In the special case that $\alpha=\omega$, these are called wITRMs, which were introduced and studied in Koepke \cite{K}. The more general type was first mentioned in Koepke \cite{K1}, but received little attention until \cite{alpha itrms}. 

In \cite{K1}, Koepke showed that, for $\alpha=\omega$, the subsets of $\alpha$ computable by such an $\alpha$-ITRM are exactly those in $L_{\omega_{\omega}^{\text{CK}}}$. Further information on $\omega$-ITRMs was obtained in \cite{CFKMNW} and \cite{KM}. It is also known from Koepke and Siders \cite{ORM} that, when one lets $\alpha$ be On, i.e., when one imposes no restriction on the size 
of register contents, the computable sets of ordinals are exactly the constructible ones. Recently, strengthening a result in \cite{C}, it was shown in \cite{alpha itrms} that the $\alpha$-ITRM-computable subsets of $\alpha$ coincide with those in $L_{\alpha+1}$ if and only if $L_{\alpha}\models\text{ZF}^{-}$\footnote{I.e., ZF set theory without the power set axiom; for the subtleties of the axiomatization, see \cite{GJH}.}; and moreover, it was shown that, for any exponentially closed $\alpha$, 
the $\alpha$-ITRM-computable subsets of $\alpha$ are exactly those in $L_{\beta(\alpha)}$, where $\beta(\alpha)$ is the supremum of the $\alpha$-ITRM-halting times, which coincides with the supremum of the ordinals that have $\alpha$-ITRM-computable codes. To determine the computational strength of $\alpha$-ITRMs for some exponentially closed ordinal $\alpha$, one thus needs to determine $\beta(\alpha)$. 
However, except for the cases $\alpha=\omega$, $\alpha=\text{On}$ and $L_{\alpha}\models$ZF$^{-}$, no value of $\beta(\alpha)$ is currently known. A reasonable conjecture compatible with all results obtained in \cite{alpha itrms} was that $\beta(\alpha)=\alpha^{+\omega}$, the first limit of admissible ordinals greater than $\alpha$, 
unless $L_{\alpha}\models$ZF$^{-}$, which would be the most obvious analogue of Koepke's result on $\omega$-ITRMs. This, however, will be shown to be false below. As a result, there is currently not even a good conjecture about what the values of $\beta(\alpha)$ might be, making the problem even more difficult.

Concerning weak register machines, we know from Koepke \cite{K} that the subsets of $\omega$ computable by a wITRM are precisely the hyperarithmetic ones, i.e., those contained in $L_{\omega_{1}^{\text{CK}}}$. It was then shown in \cite{alpha itrms} that, when $\alpha$ is $\Pi_3$-reflecting, the $\alpha$-wITRM-computable subsets of $\alpha$ are precisely those that are $\Delta_{1}$ over $L_{\alpha}$. Ordinals with the latter property are called ``$u$-weak'', and it is known that $u$-weak ordinal need not be $\Pi_3$-reflecting (\cite{C2}, Theorem $53$) (although they need to be admissible (\cite{C2}, Theorem 52), but not every admissible ordinal is $u$-weak (\cite{C2}, Theorem 53)). A full characterization of $u$-weak ordinals, or, more generally, of the computational strength of $\alpha$-ITRMs for values of $\alpha$ that are neither $\omega$ nor $\Pi_3$-reflecting is still wanting.


In this paper, we will obtain lower bounds on the computational strength of $\alpha$-ITRMs by showing how, when $\alpha$ is exponentially closed, $\alpha$-ITRMs can compute transfinite (in fact $\alpha\cdot\omega$ long) iterations of $\beta$-ITRM-computable operators for $\beta<\alpha$. As a consequence, 
we are able to show that the conjecture mentioned above fails dramatically: In fact, for the first exponentially closed ordinal $\varepsilon_{0}$ larger than $\omega$, we will already have $\beta(\varepsilon_{0})\geq\omega_{\varepsilon_{0}\cdot\omega}^{\text{CK}}$, while the next limit of admissible ordinals after $\varepsilon_{0}$ is of course still $\omega_{\omega}^{\text{CK}}$. This improves Corollary 48 of \cite{alpha itrms}, where it was shown that $\beta(\alpha)\geq\alpha^{+\omega}$ when $\alpha$ is an index ordinal.\footnote{Thus, while the goal of our project is to ``tame'' the ``zoo'' of machine models, this paper rather indicates that the ``zoo'' may in fact be a jungle. We hope that this will attract adventurers.} 

We also show that $\alpha$-ITRMs are either able to solve the halting problem for $\alpha$-ITRMs restricted to programs using a fixed number $k$ of registers for every $k\in\omega$ or allow for a universal $\alpha$-ITRM. For $\alpha=\omega$, the first alternative is known to hold by Koepke and Miller \cite{KM} and for $L_{\alpha}\models$ZF$^{-}$, this follows from the results in \cite{alpha itrms}, while for $\alpha=\text{On}$, the second alternative holds by \cite{ORM}. We currently do not know which alternative holds for any other values of $\alpha$. We then offer a third characterization of these $\alpha$ by showing that the restricted halting problem is solvable precisely for those $\alpha$ for which granting the machine the ability to notice when its working time reaches a cardinal does not change their computational power. Such cardinal-recognizing variants were first considered for ITTMs by Habic, see \cite{Ha}. 

In the considerations so far, $\alpha$-(w)ITRMs computations are always understood to be allowed the use of parameters, i.e., some registers besides the input register may initially contain ordinals other than $0$. Dropping or restricting parameters, the picture changes considerably and resembles the situation for parameter-free tape models, so called $\alpha$-ITTMs, as described in \cite{R} and \cite{CRS}: In particular, there are pairs of ordinals $\alpha,\beta$ such that $\alpha$-ITRMs and $\beta$-ITRMs are incomparable with respect to their computational strength.

For an ordinal $\alpha$, we will write $\alpha^{+}$ to denote the smallest admissible ordinal strictly larger than $\alpha$. Moreover, for $\alpha,\iota\in\text{On}$, we recursively define $\alpha^{+0}=\alpha$, $\alpha^{+(\iota+1)}=(\alpha^{+\iota})^{+}$ and $\alpha^{+\iota}=\text{sup}_{\xi<\iota}\alpha^{+\xi}$ when $\iota$ is a limit ordinal. We use $p(\alpha,\beta)$ for Cantor's ordinal pairing function. For $x\subseteq\alpha$, $P$ an $\alpha$-(w)ITRM-program, $\rho<\alpha$, we write $P^{x}\downarrow=\iota$ to indicate that the program $P$, when run in the oracle $x$ and with $\rho$ initially in its first register, halts with $\iota$ in its first register, and we write $P^{x}(\rho)\uparrow$ to indicate that the computation does not halt; when $x$ is empty, the superscript is omitted.



\subsection{A survey of the computational strength of transfinite register machine models}

In this section, we want to summarize the results known so far on the computational strength of 
register models of transfinite computability. 

\begin{defini}{\label{comp strength}}
Recall that an ordinal $\beta$ is $\alpha$-(w)ITRM-clockable if and only if there is an $\alpha$-(w)ITRM-program $P$ and a parameter $\rho<\alpha$ such that $P(\rho)$ halts in precisely $\beta$ many steps.

Let us denote by COMP$_{\alpha-\text{wITRM}}$ and COMP$_{\alpha-\text{ITRM}}$ the sets of subsets of $\alpha$ computable by $\alpha$-wITRMs and $\alpha$-ITRMs (both with parameters), respectively. 

Moreover, we recall from \cite{alpha itrms} that $\beta(\alpha)$ denotes the supremum of the $\alpha$-ITRM-clockable ordinals, while $\beta^{w}(\alpha)$ denotes the supremum of the $\alpha$-wITRM-clockable ordinals. 
\end{defini}

We use a standard way of encoding transitive $\in$-structures as subsets of ordinals: Given a transitive $\in$-structure $S$ an ordinal $\alpha$ and a bijection $f:\alpha\rightarrow S$, we can code $S$ by $\{p(\iota,\xi):\iota,\xi\in\alpha\wedge f(\iota)\in f(\xi)\}$. When $\beta$ is closed under the pairing function, this will yield a subset of $\beta$, in which case it is called a $\beta$-code for $S$. We can then say that $S$ is $\alpha$-ITRM-computable if and only if $S$ has an $\alpha$-ITRM-computable $\alpha$-code.

We recall that ZF$^{-}$ denotes Zermelo-Fraenkel set theory without the power set axiom; more precisely, we use the axiomatization given in Gitman et al. \cite{GJH}.

Since every $\alpha$-wITRM-computation is also an $\alpha$-ITRM-computation, it is clear that the computational strength of $\alpha$-wITRMs is no greater than that of $\alpha$-ITRMs; and we see from the above that, when $\alpha=\omega$, the computational strength of $\alpha$-ITRMs considerably exceeds that of $\alpha$-wITRMs. We note that this holds for unboundedly many ordinals, while it fails for unboundedly many others. For most ordinals, we currently do not know the answer. 

\begin{prop}{\label{weak equals strong}}
\begin{enumerate}
\item There are unboundedly many ordinals $\alpha$ such that 

COMP$_{\alpha-\text{wITRM}}\subsetneq$COMP$_{\alpha-\text{ITRM}}$. 
\item There are unboundedly many ordinals $\alpha$ such that COMP$_{\alpha-\text{wITRM}}=$COMP$_{\alpha-\text{ITRM}}$. In fact, for each ordinal $\alpha$ and each $\gamma\in[\alpha+1,\alpha\omega)$, we have COMP$_{\gamma-\text{ITRM}}=$COMP$_{\gamma-\text{wITRM}}$. 
\end{enumerate}
\end{prop}
\begin{proof}
\begin{enumerate}
\item Let $\alpha$ be such that $L_{\alpha}\models$ZF$^{-}$. (It is easy to see that there are unboundedly many ordinals with this property; for instance, every regular cardinal is of this kind.) 
Now, by Theorem $19$ of \cite{alpha itrms}, we have COMP$_{\alpha-\text{ITRM}}=\mathfrak{P}(\alpha)\cap L_{\alpha+1}$. On the other hand, each such ordinal is clearly $\Pi_{3}$-reflecting, and thus, by Theorem $37$ of \cite{alpha itrms}, we have COMP$_{\alpha-\text{wITRM}}=\Delta_{1}(L_{\alpha})\cap\mathfrak{P}(\alpha)$. This is clearly a proper subset of $\mathfrak{P}(\alpha)\cap L_{\alpha+1}$. 

\item Let $\alpha=\beta+1$ be a successor ordinal. It is easy to see that a $\beta$-ITRM-computation can be simulated by an $\alpha$-wITRM-computation in which each register that contains $\beta$ is reset to $0$. Thus, we have COMP$_{\beta-\text{ITRM}}\subseteq$COMP$_{\alpha-\text{wITRM}}$. 

we will show in Proposition \ref{strong successor} below that COMP$_{\beta-\text{ITRM}}=$COMP$_{(\beta+1)-\text{ITRM}}$ for all ordinals $\beta$. From this, it follows that, for $\gamma\in[\alpha+2,\alpha\omega)$, we have:

COMP$_{\gamma-\text{wITRM}}\subseteq$COMP$_{\gamma-\text{ITRM}}=$\\COMP$_{(\alpha+1)-\text{ITRM}}\subseteq$COMP$_{(\alpha+2)-\text{wITRM}}\subseteq$COMP$_{\gamma-\text{wITRM}}$.

\end{enumerate}
\end{proof}

%
%
%

\begin{question}
Characterize those ordinals $\alpha$ for which COMP$_{\alpha-\text{ITRM}}=$COMP$_{\alpha-\text{wITRM}}$.
\end{question}

The results known so far about the computational strength of $\alpha$-ITRMs and $\alpha$-wITRMs are the following:

\begin{defini}
An ordinal $\alpha$ is called (w)ITRM-singular if and only if there is an $\alpha$-(w)ITRM-computable cofinal function $f:\beta\rightarrow\alpha$ with $\beta<\alpha$. 
\end{defini}

\begin{thm}{\label{comp strength known}}

\begin{enumerate}[label=(\roman*)]
\item An ordinal $\alpha$ is ITRM-singular if and only if $L_{\alpha}\not\models\text{ZF}^{-}$ (\cite{alpha itrms}). 
\item If $\alpha$ is ITRM-singular, then an ordinal is $\alpha$-ITRM-clockable if and only if it is $\alpha$-ITRM-computable. 
\item COMP$_{\omega-\text{ITRM}}=\mathfrak{P}(\omega)\cap L_{\omega_{\omega}^{\text{CK}}}$, and $\beta(\omega)=\omega_{\omega}^{\text{CK}}$ (Koepke, \cite{K1}). 
\item COMP$_{\omega-\text{wITRM}}=\mathfrak{P}(\omega)\cap L_{\omega_{1}^{\text{CK}}}$, and $\beta^{w}(\omega)=\omega_{1}^{\text{CK}}$ (Koepke, \cite{K}). 
\item By slight abuse of notation, COMP$_{\text{On}-\text{ITRM}}=$COMP$_{\text{On}-\text{wITRM}}=\mathfrak{P}(\text{On})\cap L$. (Koepke, \cite{ORM})
\item COMP$_{\alpha-\text{ITRM}}=\mathfrak{P}(\alpha)\cap L_{\alpha+1}$ if and only if $L_{\alpha}\models\text{ZF}^{-}$ if and only if $\alpha$ is not ITRM-singular. In this case, we have $\beta(\alpha)=\alpha^{\omega}$ (\cite{alpha itrms}).
\item In all other cases, COMP$_{\alpha-\text{ITRM}}=\mathfrak{P}(\alpha)\cap L_{\beta(\alpha)}$ (\cite{alpha itrms}).
\item If $\alpha$ is $\Pi_{3}$-reflecting, then $\beta^{w}(\alpha)=\alpha$ and COMP$_{\alpha-\text{wITRM}}=\Delta_{1}(L_{\alpha})$ (\cite{alpha itrms}). 
\end{enumerate}
\end{thm}

For ($\omega$-)ITRMs, we have the following important result by Koepke and Miller \cite{KM}:

\begin{thm}{\label{bounded halting}}
[Koepke and Miller, \cite{KM}] For every $k\in\omega$, there is an ITRM-program that solves the halting problem for ITRM-programs using at most $k$ many registers. 
\end{thm}

\section{Lower bounds on jump ordinals for register models} 

It is easy to see that, if parameters are allowed, computations of $\alpha$-ITRMs can be simulated on $\beta$-wITRMs whenever $\beta>\alpha$, so that COMP$_{\alpha-\text{ITRM}}\subseteq$COMP$_{\beta-\text{wITRM}}$ for all ordinals $\alpha<\beta$. 

A natural task is then to determine those ordinals where the computational strength does actually increase, i.e., for each ordinal $\alpha$, the minimal ordinal $\alpha^{\prime}$ such that $\beta(\alpha^{\prime})>\beta(\alpha)$. In this section, we show that $\beta(\alpha)=\beta(\alpha^{\prime})$ whenever $\alpha^{\prime}\in[\alpha,\alpha\omega)$.

\begin{defini}
Let $\alpha$ be an ordinal. Then $\alpha^{j}_{\text{ITRM}}$, the ``ordinal ITRM-jump of $\alpha$'', denotes the minimal ordinal $\gamma$ such that $\beta(\gamma)>\beta(\alpha)$. 
\end{defini}

The ``jump''-terminology is justified by the following observation:\footnote{While it may be tempting to try to prove the next lemma using a simulation argument, simulating an arbitrary $\alpha$-ITRM-program on a $\gamma$-ITRM and clocking $\beta(\alpha)$ along the way. However, we point out that we know of no way to ``trade'' increased register capacity for extra register in a simulation; that is, we do not know how to simulate $k$ $\alpha$-registers with less than $k$ $\gamma$-registers, even if $\gamma$ is much larger than $\alpha$.} 

\begin{lemma}
For each infinite $\alpha$, $\alpha^{j}_{\text{ITRM}}$ is the smallest ordinal $\gamma$ such that the halting problem for $\alpha$-ITRMs is solvable by a $\gamma$-ITRM. 
\end{lemma}
\begin{proof}
We observe that, by Lemma $31$ from \cite{alpha itrms}, if $\beta(\gamma)>\beta(\alpha)$, there will be a $\gamma$-ITRM that clocks $\beta(\alpha)$, so $\beta(\alpha)$ is $\gamma$-ITRM-computable, and hence, 
so is a subset of $\alpha$ coding $L_{\beta(\alpha)}$ from which the halting set for $\alpha$-ITRMs is then $\gamma$-ITRM-computable.
\end{proof}


Towards the goal of this section, we recall the following result, along with its proof, from \cite{alpha itrms}, Proposition 69:

\begin{prop}{\label{strong successor}}
For each ordinal $\alpha$ and all $\alpha^{\prime}\in[\alpha+1,\alpha\omega)$, we have $\beta(\alpha+1)=\beta(\alpha^{\prime})$. 
Thus, for all $\gamma,\delta\in[\alpha+1,\alpha\omega)$, we have COMP$_{\gamma-\text{ITRM}}=$COMP$_{\delta-\text{ITRM}}$.\footnote{Note that this shows, conversely to the above footnote, how to ``trade'' increased register number for register capacity.} 
\end{prop}
\begin{proof}
It is clear that $\beta(\alpha^{\prime})\geq\beta(\alpha+1)$, as, for $\gamma_{0}<\gamma_{1}$, $\gamma_{0}$-ITRMs can be simulated on $\gamma_{1}$-ITRMs. 

For the reverse direction, we recall the brief argument from \cite{alpha itrms} for the sake of the reader.

Suppose that $\alpha$ is a limit ordinal. Since $\alpha<\alpha^{\prime}<\alpha\omega$, there is $k\in\omega$ such that $\alpha^{\prime}<\alpha\cdot k$. 
We can thus simulate an $\alpha\cdot k$-ITRM on an $(\alpha+1)$-ITRM by replacing each register $R$ of the $\alpha\cdot k$-ITRM with $k$ registers $R_{1}$, ..., $R_{k}$ 
of the $(\alpha+1)$-ITRM, representing the ordinal $\gamma<\alpha\cdot k$ by writing $\gamma$ as $\alpha\cdot i+\rho$ with $i<k$ and $\rho<\alpha$ and 
then letting $R_{1}$, ..., $R_{i}$ contain $\alpha$, letting $R_{i+1}$ contain $\rho$ and letting $R_{j}$ contain $0$ for $j>(i+1)$. 

It follows that $\beta(\alpha)\leq\beta(\alpha^{\prime})\leq\beta(\alpha\cdot k)=\beta(\alpha)$, so that $\beta(\alpha)=\beta(\alpha^{\prime})$. 

If $\alpha$ is a successor ordinal, write $\alpha$ as $\hat{\alpha}+k$, where $\hat{\alpha}$ is limit ordinal and $k\geq 1$ is a natural number, so that $\alpha+1\geq\hat{\alpha}+2$. 
Then the above shows that, for $k\in\omega$ sufficiently large, $\beta(\hat{\alpha}+1)\leq\beta(\alpha+1)\leq\beta(\alpha^{\prime})\leq\beta(\hat{\alpha}\cdot k)=\beta(\hat{\alpha})$, so $\beta(\alpha+1)=\beta(\alpha^{\prime})$, as desired. 
\end{proof}

It thus remains to see that $\beta(\alpha)=\beta(\alpha+1)$. 

\begin{remark}
Note that Proposition \ref{strong successor} fails for unresetting machines: For example, given that $\alpha+1$-wITRMs can simulate $\alpha$-ITRMs, and using Theorem \ref{comp strength known}, we have 
COMP$_{\omega-\text{wITRM}}=\mathfrak{P}(\omega)\cap L_{\omega_{1}^{\text{CK}}}\subsetneq \mathfrak{P}(\omega)\cap L_{\omega_{\omega}^{\text{CK}}}=$COMP$_{\omega-\text{ITRM}}\subseteq$COMP$_{(\omega+1)-\text{wITRM}}$. 
\end{remark}

\begin{lemma}{\label{weak successor}}
For all ordinals $\alpha$, we have $\beta(\alpha)=\beta(\alpha+1)$. 
\end{lemma}

\begin{proof}
If $\alpha=\bar{\alpha}+1$ is a successor ordinal, this follows from Proposition \ref{strong successor}: For then, applying Proposition \ref{strong successor} to $\bar{\alpha}$, we have $\beta(\alpha+1)=\beta((\bar{\alpha}+1)+1)=\beta(\bar{\alpha}+2)=\beta(\bar{\alpha})=\beta(\bar{\alpha}+1)=\beta(\alpha)$.

We can thus assume without loss of generality that $\alpha$ is a limit ordinal. 

Let $P$ be an $(\alpha+1)$-ITRM-program using the registers $R_{1}$, ..., $R_{n}$. We show how the actions of $P$ can be simulated on an $\alpha$-ITRM. Each register $R_{i}$ is represented by a triple $(\gamma,j,k)\in\alpha\times\{0,1\}\times\{0,1\}$. These triples are stored in three registers $(A_{i},F_{i},D_{i})$ (where $A$ stands for ``$\alpha$-part'', 
$F$ for ``final part'' and $D$ for ``detector''). The representation works via a decoding function $f$, which is defined by $f(\gamma,j,k)=\begin{cases}\gamma\text{, if }j=0\\\alpha\text{, if }j=1\end{cases}$. Note that this coding does not need the last component $k$; this will be used in the simulation to detect overflows at limit times: to this end, $D_{i}$ will contain $0$ when $A_{i}$ contains $0$ and otherwise, $D_{i}$ will contain $1$. We also use two flag registers to detect limit times.\footnote{See, e.g., \cite{C}, p. 17. We use two registers $R_{0}$, $R_{1}$ that initially contain $0$ and $1$ and swap their contents at each computation step. Thus, the computation will be at a limit time if and only if both registers contain $0$.}

We now explain how the simulation works at successor stages. Suppose that $P$ is run and the active program line is $L$. Then our simulation works as follows (where we abuse notation by confusing registers with their contents):

\begin{itemize}
\item If $L$ contains COPY($i$,$j$), then the contents of $A_{j}$, $F_{j}$ and $D_{j}$ are replaced by those of $A_{i}$, $F_{i}$ and $D_{i}$, respectively.
\item If $L$ contains $R_{i}\leftarrow R_{i}+1$, we distinguish two subcases: 

(i) If $F_{i}=0$, then leave $F_{i}$ and replace the content of $A_{i}$ by its successor (this will always work, since, by assumption, $\alpha$ is a limit ordinal). If $A_{i}=0$, additionally replace the content of $D_{i}$ by $1$. 

(ii) If $F_{i}=1$, replace the contents of $A_{i}$, $F_{i}$ and $D_{i}$ by $0$ (this corresponds to a reset due to an overflow).

\item If $L$ contains $R_{i}\leftarrow 0$, replace the contents of $A_{i}$, $F_{i}$ and $D_{i}$ by $0$. 
\item If $L$ contains ``IF $R_{i}=R_{j}$ THEN GOTO $l$'' and we have both $A_{i}=A_{j}$ and $F_{i}=F_{j}$, change the active program line to $l$. Otherwise, continue with the next program line.\footnote{This adapts to the comparison of finite sequences of registers in the obvious way.}
\end{itemize}

At a limit time $\delta$, it is not always the case that applying $f$ to the contents of $(A_{i},F_{i},D_{i})$ at time $\delta$ is the content of $R_{i}$ at time $\delta$. However, this content is easily calculated:
Namely, if $A_{i}\neq 0$ (that is, the inferior limit in $R_{i}$ was different from $0$, but below $\alpha$) or $A_{i}=D_{i}=0$ (i.e., if $R_{i}$ contained $0$ cofinally often), we leave the contents of $A_{i}$, $F_{i}$ and $D_{i}$ unchanged. If $A_{i}=0$ and $D_{i}\neq 0$, an overflow has taken place, and we replace the content of $F_{i}$ by $0$, leaving $A_{i}$ and $D_{i}$ unchanged at $0$. 

It is now easy to see that this simulation works as desired. 
\end{proof}

\begin{corollary}{\label{no change}}

For all ordinals $\alpha$ and all $\alpha^{\prime}\in[\alpha,\alpha\omega)$, we have $\beta(\alpha)=\beta(\alpha^{\prime})$. 
\end{corollary}
\begin{proof}
Immediate from Proposition \ref{strong successor} and Lemma \ref{weak successor}.
\end{proof}

\begin{question}
Given Corollary \ref{no change}, one might now conjecture that, in general, we have $\alpha^{j}=\alpha\omega$ for all $\alpha\in\text{On}$. Is this true? Note that, below, we will show that $\alpha^{\omega}\geq\alpha^{j}$ for certain values of $\alpha$. 
\end{question}

\section{The BH-dichotomy}

A particularly peculiar property of ITRMs is Theorem \ref{bounded halting}, i.e., the fact that, for any $k\in\omega$ the halting problem for ITRMs using at most $k$ registers is solvable by an ITRM-program (which, of course, uses more than $k$ registers); see Koepke and Miller \cite{KM}, Theorem $4$. From this, it is deduced in \cite{KM} that there is no universal ITRM. The argument relies crucially on properties of $\omega$ and does not generalize to any other multiplicatively closed ordinal.\footnote{In \cite{C}, Exercise 2.3.10, it is shown how the proof can be generalized to ordinals of the form $\omega\cdot k$ for $k\in\omega$.} In fact, we do not know whether any other such ordinals have this property. In this section, we will show that, for each $\alpha$, either $\alpha$ has the property just described, or there is, in a certain sense, a universal $\alpha$-ITRM. 


\begin{defini}
An ordinal $\alpha$ has the `bounded halting property' if and only if, for any $k\in\omega$, there are an $\alpha$-ITRM-program $P$ and an ordinal $\zeta<\alpha$ such that $P(\zeta)$ solves the halting problem for $\alpha$-ITRMs using at most $k$ registers. More specifically, for all $i\in\omega$, $\xi<\alpha$, $P(i,\xi,\zeta)$ halts with output $1$ if and only if, for the $i$-th program $P_{i}$ using at most $k$ registers, $P_{i}(\xi)$ halts and otherwise, it halts with output $0$.

If $\alpha$ has the bounded halting property, we also say that $\alpha$ is BH.
\end{defini}

\begin{defini}{\label{universal machine}}
If $P$ is an $\alpha$-ITRM-program and $\zeta<\alpha$, we say that $(P,\zeta)$ is $\alpha$-universal if and only if, for any $\alpha$-ITRM-computable set $x\subseteq\alpha$, 
there are $j\in\omega$, $\nu<\alpha$ such that, for every $\iota<\alpha$, $P(j,\nu,\iota,\zeta)$ halts with output $1$ if and only if $\iota\in x$ and otherwise with output $0$.
\end{defini}

\begin{thm}{\label{halting dichotomy}}
Let $\alpha$ be an ordinal. Then $\alpha$ is either BH or there are a program $P$ and $\zeta<\alpha$ such that $(P,\zeta)$ is $\alpha$-universal.
\end{thm}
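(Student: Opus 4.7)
My plan is to prove the contrapositive: assuming $\alpha$ is not BH, I construct an $\alpha$-universal pair. Fix $k_0 \in \omega$ witnessing the failure of BH, so no $\alpha$-ITRM-program with parameter $<\alpha$ decides the halting problem for $k_0$-register $\alpha$-ITRMs.

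The strategy hinges on a \emph{register-hierarchy collapse} claim: every $\alpha$-ITRM-computable subset of $\alpha$ is already computable by some program using at most $k_0$ registers with a parameter $<\alpha$. Granting this, the universal machine is immediate: let $P$ be a $(k_0 + c)$-register simulator that, on input $(j, \nu, \iota, \zeta)$, decodes $j$ as the index of a $k_0$-register program $P_j$ and runs $P_j(\iota, \nu)$ step by step, outputting its value on halting; the extra $c$ registers handle program bookkeeping. Then for any computable $x \subseteq \alpha$ witnessed by a $k_0$-register program $P_{j_x}$ with parameter $\nu_x$, the pair $(j_x, \nu_x)$ makes $P(j_x, \nu_x, \cdot, \zeta)$ compute the characteristic function of $x$, so $(P, \zeta)$ is $\alpha$-universal.

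To establish the collapse, I would argue by contradiction: suppose some computable $x$ is not computable by any $k_0$-register program, and let $P^{\ast}$ be a witnessing program using $k^{\ast} > k_0$ registers with parameter $\nu^{\ast}$. The idea is to use the extra $k^{\ast} - k_0$ registers of $P^{\ast}$ as workspace to simulate any $k_0$-register program $P_i$ on any input $\xi$ while concurrently running a loop-detection routine---coding the generated configuration history as an ordinal $<\alpha$ and testing for recurrence. A detected repetition certifies non-halting, while actual halting is observed directly. Packaging this into a single program with a suitable parameter would yield a solver for the $k_0$-register halting problem, contradicting the choice of $k_0$.

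The main obstacle is making the loop-detection rigorous for general $\alpha$: coding configuration sequences as ordinals $<\alpha$ requires arithmetic closure of $\alpha$, whereas the theorem is stated for arbitrary ordinals. My fallback is that the very absence of such a coding would force the reachable configuration space of $k_0$-register programs to be small enough that halting is directly decidable---so that either the contradiction with the choice of $k_0$ is recovered, or the bounded configuration space itself yields the universal pair.
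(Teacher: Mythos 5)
Your overall plan---derive a universal pair from the failure of BH---matches the direction of the paper's proof that actually carries the theorem, but the central lemma you rely on is left unproved, and the argument you sketch for it does not work.

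The ``register-hierarchy collapse'' is the whole content of this direction, and your contradiction argument for it is structurally off. From the assumption that some computable $x$ requires $k^{\ast}>k_{0}$ registers you try to manufacture a solver for the $k_{0}$-register halting problem; but the mere existence of such an $x$ (or of the program $P^{\ast}$ computing it) gives you no algorithm for deciding halting. The paper's corresponding step works with \emph{halting times}, not with which sets are computable: if $\alpha$ is not BH, there must be a $k$ such that the halting times of $k$-register programs are already cofinal in $\beta(\alpha)$, because otherwise some program with more registers would halt later than every $k$-register program and could be run in parallel as a stopwatch, solving the bounded halting problem. That stopwatch argument is the missing idea. Your loop-detection routine is also unsound as described: by the configuration-counting result cited in the paper, a configuration may recur up to $\omega^{\omega}$ many times in a \emph{halting} computation, so detecting a recurrence certifies nothing; genuine divergence detection is essentially the question of whether $\alpha$ is BH, which is exactly what you cannot assume here. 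Finally, ``coding the configuration history as an ordinal $<\alpha$'' is hopeless in general, since computations run for up to $\beta(\alpha)$ many steps, which is far above $\alpha$. Your fallback paragraph does not repair any of this.

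The paper's route avoids a collapse for arbitrary computable sets altogether. After the stopwatch argument it uses the configuration-counting theorem to get a single $k^{\prime}$ such that every ordinal $<\beta(\alpha)$ is clockable with $k^{\prime}$ registers, then a uniform passage from clocking $\xi$ to computing a code for $\xi$ to get a bound $k^{\prime\prime}$, and then computes a code for $L_{\xi}$ from a code for $\xi$ and reads out any $x\in L_{\xi}$ from that code given an index as a parameter. So the universal machine never simulates an arbitrary program; it decodes sets from $L$-codes. Your simulator-based universal machine would be fine \emph{if} the collapse held, but as it stands the proposal has a genuine gap at its key step.
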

\begin{proof} 
If $\alpha$ is a ZF$^{-}$-ordinal, then $\alpha$-ITRM-programs using $k$ registers halt before time $\alpha^{k+1}$; and moreover, it is easy to see that $\alpha^{k+1}$ is $\alpha$-ITRM-clockable for every $k\in\omega$. Consequently, the first alternative holds and the second fails. 

We thus assume from now on that $\alpha$ is ITRM-singular. In particular, by Lemma $34$ of \cite{alpha itrms}, we can carry out a well-foundedness check for orderings coded by subsets of $\alpha$ on an $\alpha$-ITRM.

Suppose that $\alpha$ is BH. We show that no $(P,\zeta)$ can be universal for $\alpha$. Suppose for a contradiction that $(P,\zeta)$ is universal for $\alpha$; let $P$ use $k$ many registers; moreover, pick an $\alpha$-ITRM-program $W$ that can test subsets of $\alpha$ for coding well-orderings and suppose that $W$ uses $l$ registers. Now, let $H$ be a program that solves the halting problem for $\alpha$-ITRMs with $k+l$ registers. Consider the following $\alpha$-ITRM-program $Q$: Run through all pairs $(i,\xi)\in\omega\times\alpha$ such that $i$ codes a program using at most $k$ registers and use $H$ to determine whether $P_{i}(\xi)$ will halt; if not, continue with the next pair, otherwise run $P_{i}(\xi)$ and then continue with the next pair. We show that this will halt after all $\alpha$-ITRM-halting times, which will be a contradiction. To see this, note that, by assumption all $\alpha$-ITRM-clockable ordinals are computable by $P$ for an appropriate choice of the parameters. But now, $Q$ in particular runs those programs that compute a code for some ordinal $\gamma$ and apply $W$ to this code, which takes at least $\gamma$ many steps. Since this happens for all clockable $\gamma$, the halting time of $Q$ will be above any $\alpha$-ITRM-clockable ordinal, a contradiction.

On the other hand, suppose that $\alpha$ is not BH. This implies that there is $k\in\omega$ such that the supremum of the ordinals clockable with an $\alpha$-ITRM using at most $k$ many registers is equal to $\beta(\alpha)$, for otherwise, we could use a program that halts after more steps as a stopwatch to solve the bounded halting problem for $k$ registers, contradicting the assumption that $\alpha$ is not BH. Pick $k$ such that this supremum attains $\beta(\alpha)$. It is not hard to see that there is a natural number $k^{\prime}$ such that, in fact, every ordinal $<\beta(\alpha)$ is clockable by an $\alpha$-ITRM-program using $k^{\prime}$ many registers: Namely, to clock $\xi<\alpha$, pick a program $Q$ using $k$ registers that halts in $\nu\geq\xi$ many steps. By Lemma $30$ in \cite{alpha itrms} (which generalizes a result from \cite{CFKMNW} on ITRMs), no configuration can occur in the halting computation of $Q$ at least $\omega^{\omega}$ many times. Thus, there is some ordinal $\xi^{\prime}<\omega^{\omega}$ and some $Q$-configuration $c$ such that, in the computation of $Q$, $c$ appears for the $\xi^{\prime}$-th time at time $\xi$. Using a few extra registers and given $c$ and $\xi^{\prime}$ as parameters, this can be detected and used to clock $\xi$. Moreover, again by a result from \cite{alphaitrms} which generalizes another result from \cite{CFKMNW}, $\alpha$-ITRM-computable ordinals have $\alpha$-ITRM-computable codes and in fact, the transition from a program $P$ that clocks $\xi$ to one that computes a code for $\xi$ is uniform in $P$ and uses a fixed number of extra registers depending only on the number of registers used by $P$. Thus, there is $k^{\prime\prime}\in\omega$ such that, for each $\xi<\beta(\alpha)$, there is an $\alpha$-ITRM-program using at most $k^{\prime\prime}$ many registers that computes a code for $\xi$. 

Now,  our universal program $\mathcal{U}$ will work as follows:  Let $P$ be an $\alpha$-ITRM-program that  computes some set $x\subseteq\alpha$. Pick $\xi<\beta(\alpha)$ minimal such that $x\in L_{\xi}$. By assumption, there must be an $\alpha$-ITRM-program $P^{\prime}$ that uses at most $k^{\prime\prime}$ many registers and computes a code for $\xi$. But now, by results in \cite{C} that generalize 
results in \cite{ORM}, there is an $\alpha$-ITRM-program $P_{L}$ that computes a code $d$ for $L_{\xi}$ from a code for $\xi$, uniformly in $\xi$. Finally, again by results in \cite{C}, a code $d$ for $L_{\xi}$ can be used to read out any $y\subseteq\alpha$ contained in $L_{\xi}$ when the index $\iota^{\prime}$ coding $y$ in the sense of $d$ is given. 

\end{proof}

\begin{remark}
We note that the above theorem yields a universal machine only in a rather weak sense of the word: Although we indeed obtain a program $U$ that is universal in the sense that, entering the appropriate parameters $i$ and $\gamma$, $U$ will compute the same function as $P_{i}(\gamma)$, this does work in any proper sense by $U$ simulating the work of $P_{i}$, but rather by computing the same function in a completely different way. This can be made precise by observing that there is no reason to expect $U$ to work relative to oracles $x\subseteq\alpha$. A more satisfying result would be that, for each $\alpha$ and $x\subseteq\alpha$, either the bounded halting problem for $\alpha$-ITRMs is solvable on $\alpha$-ITRMs \textit{uniformly in $x$} -- i.e., there is, for each $k\in\omega$, an $\alpha$-ITRM-program $H_{k}$ that solves the halting problem for $\alpha$-ITRMs with $k$ registers relative to every oracle $x\subseteq\alpha$ -- or a program $U$ such that, for all $x\subseteq\alpha$, all $i\in\omega$ and all $\gamma\in\alpha$, $U^{x}(i,\gamma)$ computes the same function as $P_{i}^{x}$. 
\end{remark}

\section{Restricted parameters}

Unless $\alpha=\omega$ or $L_{\alpha}\models\text{ZF}^{-}$, where the answer is positive, we are currently unable to answer whether the computational strength of $\alpha$-ITRMs increases with the number of registers admitted. It is natural to conjecture that this is the case in general.\footnote{We point out that, for ordinal register machines (ORMs), which have no bound on their register contents, there is indeed a universal machine with $10$ registers, see Koepke and Siders \cite{ORM1}. However, since overflows are ruled out for these machines, this seems to bear little analogy to $\alpha$-ITRMs.} Another natural stratification of the computational power of $\alpha$-ITRMs is the size of parameters: Restricting the initial register contents to elements of some $\gamma\leq\alpha$, and denoting by COMP$^{\gamma}_{\alpha-\text{ITRM}}$ the set of subsets of $\alpha$ thus computable, and by $\beta_{\gamma}(\alpha)$ the supremum of ordinals thus clockable (and by COMP$^{\gamma}_{\alpha-\text{wITRM}}$ and $\beta^{w}_{\gamma}(\alpha)$ the analogous concepts for $\alpha$-wITRMs), one would naturally expect that $\beta_{\gamma}(\alpha)$ keeps increasing with $\gamma$, at least for certain values of $\alpha$. However, it is not hard to see that these two natural conjectures contradict each other at least in certain cases: 

\begin{defini}
We say that $\alpha$ satisfies the bounded parameter property (BP) if and only if, for cofinally in $\alpha$ many $\gamma$, we have $\beta_{\gamma}(\alpha)>\text{sup}_{\iota<\gamma}\beta_{\iota}(\alpha)$. 

Moreover, for $k\in\omega$, let us denote by $\beta^{k}(\alpha)$ the supremum of ordinals clockable by $\alpha$-ITRMs (using arbitrary parameters) using at most $k$ many registers. 
\end{defini}

\begin{prop}
If $\text{cf}(\alpha)>\omega$, then the bounded halting property and the bounded parameter property cannot hold simultaneously for $\alpha$. 
\end{prop}
\begin{proof}
Suppose otherwise, so that $\alpha$ satisfies $\rho:=\text{cf}(\alpha)>\omega$ and $\alpha$ is both BH and BP. Clearly, we have $\beta(\alpha)=\text{sup}_{\iota<\alpha}\beta^{\iota}(\alpha)$ and $\beta(\alpha)=\text{sup}_{k\in\omega}\beta^{k}(\alpha)$. Define a function $f:\omega\rightarrow\rho$ by letting $f(k)$ be the smallest $\iota\in\rho$ such that $\beta_{\iota}(\alpha)>\beta^{k}(\alpha)$. Then $f[\omega]$ is unbounded in $\rho$, so $\text{cf}(\rho)=\omega$, contradicting the assumption.
\end{proof}

We will now investigate the properties of $\alpha$-ITRMs with no or restricted parameters. This was considered in the case of $\alpha$-Turing machines by Rin in \cite{R}, and several of the questions considered below are motivated by Rin's work. 

We recall a 
standard definition. 

\begin{defini}
Let $\alpha$ be an ordinal. Then an ordinal $\tau$ is $\alpha$-ITRM-writable if and only if there are a bijection $f:\alpha\rightarrow\tau$ such that the set $\{p(\iota,\xi):\iota,\xi\in\alpha\wedge f(\iota)<f(\xi)\}$ is $\alpha$-ITRM-writable. The transfer to restricted parameters, $\alpha$-wITRMs etc. is straigthforward and we will not elaborate on it here. 

\end{defini}

The following is a variant of Theorem $30$ of \cite{alpha itrms} (which, in turn, is a generalization of Lemma $4$ of \cite{CFKMNW}) for $\alpha$-ITRMs with restricted parameters.

\begin{lemma}{\label{clock write}}
For each ordinal $\alpha$, each $\gamma\leq\alpha$ and each ordinal $\tau$, if $\tau$ is $\alpha$-ITRM-clockable with parameters in $\gamma$, then $\tau$ is also $\alpha$-ITRM-writable with parameters in $\gamma$. 
\end{lemma}
\begin{proof} 
The proof of \cite{alpha itrms}, Theorem $30$, is easily seen to adapt to parameter restrictions. 
\end{proof}

\begin{remark}
Note, however, that the downards closure of the set of $\alpha$-ITRM-clockable ordinals, which is proved in \cite{alpha itrms}, Lemma $31$ (generalizing Lemma $3$ of \cite{CFKMNW}), does not necessarily continue hold when parameters are restricted: Thus, for example, it is easy to see that $\omega_{1}$ is clockable on an $\omega_{1}$-ITRM without parameters, but as long as only parameters contained in some $\gamma<\omega_{1}$ are admitted, there will only be countable (and thus boundedly) many $\omega_{1}$-ITRM-clockable ordinals below $\omega_{1}$. It is also easy to construct countable examples of this behavior via condensation arguments. 
\end{remark}

\begin{defini}
For $X\subseteq M$, let us write $\Sigma^{M}(X)$ for the $\Sigma_{1}$-Skolem hull of $X$ in $M$ under the canonical $\Sigma_{1}$-Skolem function for $L$ (in the language of set theory). 
\end{defini}

We need a slight strengthening of Lemma $33$ from \cite{alpha itrms}:

\begin{lemma}{\label{nice codes}}
If $\gamma$ is $\alpha$-ITRM-clockable and $\alpha$ is exponentially closed, then $\gamma$ has an $\alpha$-ITRM-computable code $c\subseteq\alpha$, such that the following functions are $\alpha$-ITRM-computable:

(i) the function that maps each $\iota<\alpha$ to the ordinal $\xi$ that it represents in $c$.

(ii) the function that maps $(0,\xi)$ with $\xi<\alpha$ to the ordinal $\iota$ which represents $\xi$ in the sense of $c$, and which also maps $(1,0)$ to the ordinal $\iota$ which represents  $(0,0)$ to the ordinal $\iota$ which represents $\alpha$ for each $\xi\leq\alpha$ the ordinal $\iota$ by which $\xi$ is represented in $c$.
\end{lemma}
\begin{proof}
In \cite{alpha itrms}, this was proved with (ii) restricted to the first case (i.e., $\xi<\alpha$). However, it is easy to see how to extend the argument to $\alpha$ itself: on input $(0,\xi)$, we proceed as in \cite{alpha itrms}, while on input $(1,0)$, we let the program $C$ that clocks $\gamma$ run for $\alpha$ many steps, thus compute the configuration of this program at time $\alpha$, and then run it again for $\alpha$ many steps to count how often this configuration occurs; the configuration at time $\alpha$ and the number of appearances of this configuration up to time $\alpha$ yield the desired index. To run $C$ for $\alpha$ many steps, simply run $C$ while counting upwards in some specified register until that register overflows. 
\end{proof}

\begin{thm}
Let $\gamma\leq\alpha\in\text{On}$, and suppose that $\alpha$ is exponentially closed.
If $\alpha$ is ITRM-singular, then COMP$^{\gamma}_{\alpha-\text{ITRM}}=\Sigma_{1}^{L_{\beta_{\gamma}(\alpha)}}(\gamma\cup\{\alpha\})\cap\mathfrak{P}(\alpha)$.
\end{thm}
\begin{proof}
Let us write $H:=\Sigma_{1}^{L_{\beta_{\gamma}(\alpha)}}(\gamma\cup\{\alpha\})\cap\mathfrak{P}(\alpha)$.


First, suppose that $x\in$COMP$^{\gamma}_{\alpha-\text{ITRM}}$. Let $P$ be a program and $\rho<\gamma$ a parameter such that $P(\rho)$ computes $x$. Let $\tilde{P}(\xi)$ be the program that, successively for all $\iota<\alpha$, runs $P(\xi,\iota)$. Since $P(\rho)$ is still a program running in the parameter $\rho<\gamma$ and $P(\rho,\iota)$ halts for every $\iota<\alpha$ by assumption, $\tilde{P}(\rho)$ halts in less than $\beta_{\gamma}(\alpha)$ many steps; denote by $\tau$ the halting time of $\tilde{P}(\rho)$. Then $x$ is definable over $L_{\tau}$ and thus an element of $L_{\beta_{\gamma}(\alpha)}$. Moreover, the formula saying that $P(\rho)$ has a halting time is $\Sigma_{1}$ in the parameters $\rho$ and $\alpha$ over $L_{\beta_{\gamma}(\alpha)}$, and thus we have $\tau\in H$. Since satisfaction in $L_{\tau}$ for arbitrary $\in$-formulas is $\Delta_1$ in $\tau$, it follows that $x$ is $\Sigma_{1}$ over $L_{\beta(\gamma)}$ in $\iota<\gamma$ and $\alpha$, and thus $x\in H$. 

Conversely, assume that $x\in H$, and let $\phi$ be an $\Sigma_1$-formula and $\rho<\gamma$ such that $x$ is the $<_{L}$-minimal witness of $\phi(\rho,\alpha,y)$. Pick $\delta<\beta_{\gamma}(\alpha)$ minimal such that $L_{\delta}\models\exists{y}\phi(\rho,\alpha,y)$, so that $x\in L_{\delta}$. By definition of $\beta_{\gamma}(\alpha)$, there is an ordinal $\eta>\delta$ which is $\alpha$-ITRM-clockable with some parameter $\xi<\gamma$. By Lemma \ref{clock write}, $\eta$ is $\alpha$-ITRM-writable in parameters $<\gamma$, let $c$ be an $\alpha$-ITRM-writable code for $\alpha$ as in Lemma \ref{nice codes}. As in the proof of Lemma $34$ of \cite{alpha itrms}, we can now compute a code $d\subseteq\alpha$ for $L_{\eta}$ from $c$ in which, for each $\xi\leq\alpha$, the ordinal represented by $\iota$ in $c$ is represented by $\omega\xi$. From the parameter $\rho$, we can compute the ordinal $\rho^{\prime}<\alpha$ which codes $\rho$ in $d$; similarly, we can compute the ordinal $\alpha^{\prime}<\alpha$ which codes $\alpha$ in $d$.\footnote{This is the reason why Lemma $33$ from \cite{alpha itrms} had to be extended to include the search for the ordinal coding $\alpha$ itself: Although this is a single ordinal below $\alpha$, it may be larger than $\gamma$, so that we cannot use it as a parameter in the current context.}

 Now, $L_{\eta}\models\exists{y}\phi(\rho,\alpha,y)$ by upwards absoluteness of $\Sigma_{1}$-formulas and thus, $x\in L_{\eta}$. By searching exhaustively through $\alpha$, and using $\rho^{\prime}$ and $\alpha^{\prime}$, we can identify the ordinal $\iota\in\alpha$ that codes the $<_{L}$-minimal witness for $\phi(\rho,\alpha,y)$ (i.e., $x$). Using part (i) of Lemma \ref{nice codes}, it now follows that $x$ is $\alpha$-ITRM-computable from $c$ in the parameter $\rho$. 
\end{proof}

\begin{remark}
Without the assumption of ITRM-singularity, this is clearly false, for then, we have $L_{\alpha}\models\text{ZF}^{-}$, so COMP$_{\alpha-\text{ITRM}}=\mathfrak{P}(\alpha)\cap L_{\alpha+1}$, while $L_{\beta_{0}(\alpha)}=L_{\beta(\alpha)}=L_{\alpha^{\omega}}$. Now, in $L_{\alpha^{\omega}}$, and using the parameter $\alpha$, we can easily define $L_{\alpha+2}$ and $L_{\alpha+1}$ by $\Sigma_{1}$-formulas and then use the $\Sigma_{1}$-formula ``There is an element of $L_{\alpha+2}$ which is not contained in $L_{\alpha+1}$'' to obtain an element of $H$ which is not $\alpha$-ITRM-computable.
\end{remark}

Since there are only countable many programs, it is clear that there are (many) values $\iota<\omega_{1}$ such that a parameter-free $\omega_{1}$-(w)ITRM cannot halt with $\iota$ in its first register. The same is true for every ordinal $\geq\omega_{1}$. Moreover, via condensation, the same result can be seen to hold for unboundedly in $\omega_{1}$ many countable ordinals.

\begin{question}
Determine the minimal ordinal $\alpha$ such that, for some $\iota<\alpha$, $\{\iota\}$ is not $\alpha$-ITRM-computable without parameters.\footnote{The analogous question for $\alpha$-ITTMs was considered and answered in \cite{CRS}.}
\end{question}

Dropping parameters has the effect that the set of clockable ordinals can have gaps.

\begin{lemma}

There exists an ordinal $\rho$ such that, for all $\mu<\rho$, there are ordinals $\alpha<\beta<\gamma$ such that $\alpha$ and $\gamma$ are halting times of $\rho$-ITRM-computations using only parameters $<\mu$, but $\beta$ is not.

\end{lemma}

\begin{proof}

Let $\rho=\omega_{1}^{L}$. Then the $L$-countable ordinals that are clockable by a $\rho$-ITRM with parameters less than $\mu$ have a countable supremum (since they form a countable set of countable ordinals); let us denote this supremum by $\xi(\mu)$. Now, every ordinal between $\xi(\mu)$ and $\omega_{1}^{L}$ will fail to be $\rho$-ITRM-clockable with parameters $<\mu$; but clearly, $\omega_{1}^{L}$ is $\rho$-clockable (e.g., by a program that counts upwards in some register until an overflow is detected). So we can let $\alpha=\omega$, $\beta=\xi(\mu)$ and $\gamma=\omega_{1}^{L}$.

\end{proof}

\begin{remark}

Countable examples of this phenomenon can be obtained by forming the elementary hull $H$ of the empty set in $L_{\omega_{2}}$, taking the transitive collapse $\overline{H}$ of $H$ and considering the image of $\omega_{1}$ under the collapsing map.

\end{remark}

%
%
%
%
%
%
%
%
%
%
%
%

In Rin \cite{R}, Theorem 2.8, it was shown that, for parameter-free $\alpha$-Turing machines, there exist values $\alpha$ and $\beta$ such that the computational strength of $\alpha$-Turing machines and $\beta$-Turing machines are incomparable with respect to the computable subsets of $\text{min}\{\alpha,\beta\}$ -- that is, none is a subset of the other.

We note here that the same obtains for register machines. The argument morally (i.e., with respect to the overall strategy) resembles those in \cite{R} and \cite{CRS}; however, the considerable differences between tape and register models -- such as the unavailability of a universal program that enumerates all computable sets -- require extra efforts. 

The following lemma will come in handy.

\begin{defini}
Let $\alpha$ be an ordinal. A set $C\subseteq\mathfrak{P}(\alpha)$ is $\alpha$-ITRM-decidable if and only if there is an $\alpha$-ITRM-program $P$ and some parameter $\rho$ such that, for all $x\subseteq\alpha$, $P^{x}(\rho)$ halts with output $1$ if and only if $x\in C$, and otherwise, $P^{x}(\rho)$ halts with output $0$. 
\end{defini}

\begin{lemma}{\label{parameter free index}}
Suppose that $\alpha$ is an exponentially closed ordinal and ITRM-singular\footnote{The ITRM-singularity is not necessary for the lemma to hold; however, the proof becomes somewhat more involved without this assumption and this is the only case that we will need.} ordinal and $C\subseteq\mathfrak{P}(\alpha)$ is $\alpha$-ITRM-decidable without parameters such that $(L_{\alpha+1}\setminus L_{\alpha})\cap C\neq\emptyset$. Then  $(L_{\alpha+1}\setminus L_{\alpha})\cap C$ contains a parameter-freely $\alpha$-ITRM-computable element. 
\end{lemma}
\begin{proof}

By \cite{alpha itrms}, Lemma $41$ that $L_{\alpha}$ has a parameter-freely $\alpha$-ITRM-computable code $c$ which is such that the function $f$ mapping $\iota<\alpha$ to the ordinal coding $\iota$ in the sense of $c$, along with its inverse function are $\alpha$-ITRM-computable without parameters.\footnote{Strictly speaking, the inverse function is a partial function. In the case that it is not defined, the algorithm is supposed to indicate this, e.g., by writing the value $1$ in some register specifically reserved for this purpose. Given that $f$ is $\alpha$--ITRM-computable, it is easy to see that this can be decided by simply checking, given $\xi<\alpha$, for each $\iota<\alpha$ whether $f(\xi)=\iota$.} 


Given $c$ and $f$ (along with its inverse), it is now possible to check, for each $x\subseteq\alpha$, whether or not $x\in L_{\alpha}$: To do this, run through $\alpha$, and, for each $\iota\in\alpha$, check whether $\iota$ happens to code $x$ in the sense of $c$. This, in turn, can be done by again searching through $\alpha$ and, for each $\xi<\alpha$, testing whether $\xi\in x\leftrightarrow p(f(\xi),\iota)\in c$ and also whether, if $p(\xi,\iota)\in c$, $\xi$ has a preimage under $f$. 

Pick $y\in (L_{\alpha+1}\setminus L_{\alpha})\cap\mathfrak{P}(\omega_{1}^{L})$. By \cite{alpha itrms}, Lemma $4$, there are an $\alpha$-ITRM-program $P$, some $n\in\omega$ and some $\rho<\alpha$ such that $P(\rho)$ computes $y$ in less than $\alpha^{n}$ many steps.\footnote{The time bound is left implicit in \cite{alpha itrms}, so a remark is in order how it is obtained: Roughly, $P$ works by evaluating truth in $L_{\alpha}$ for the formula $\phi$ defining $x$ over $L_{\alpha}$; if $\phi$ is $\Sigma_{n}$, this can be done by $n$ nested exhausted searches through $\alpha$, which can be done with time bound $\alpha^{n}$.} For $\iota<\alpha$, denote by $c(\iota)$ the set $\{\xi<\alpha:P(\rho,\xi)\downarrow=1\text{ in less than }\alpha^{n}\text{ many steps}\}$. Thus, $c(\rho)=y$. It is clear that $c(\iota)$ is $\alpha$-ITRM-computable uniformly in $\iota$, since $\alpha^{n}$ is easily seen to be $\alpha$-ITRM-clockable (without parameters) for any $n\in\omega$ (simply perform $n$ nested runs through $\alpha$ in $n$ separate registers). Moreover, let $Q$ be an $\alpha$-ITRM-program that decides $C$. 

Now, the desired program works like this: We count through $\alpha$ in some register. For every $\iota<\alpha$, we (i) use $c$ to check whether $c(\iota)\in L_{\alpha}$ and (ii) run $Q^{c(\iota)}$ (which is possible since $c(\iota)$ is uniformly $\alpha$-ITRM-computable from $\iota$). If the answer to (i) is positive or if  $Q^{c(\iota)}\downarrow=0$, we continue with $\iota+1$. Otherwise, a parameter $\xi$ has been identified such that $P$ computes a set $z\subseteq\alpha$ with the desired properties in the parameter $\xi$. Since $\xi$ was computed without the use of parameters, $z$ is parameter-freely $\alpha$-ITRM-computable. 
But it is clear that this will eventually happen, since, for $\iota=\rho$ at the latest, we obtain $c(\iota)=y$, which is as desired. 

\end{proof}

\begin{lemma}{\label{incompatibility}}

[Cf. \cite{R}, Theorem 2.8] 

For every ordinal $\mu$, there are ordinals $\alpha<\beta$ such that neither COMP$^{\mu}_{\alpha-\text{ITRM}}\subseteq$COMP$^{\mu}_{\beta-\text{ITRM}}\cap\mathfrak{P}(\alpha)$ nor COMP$^{\mu}_{\alpha-\text{ITRM}}\supseteq$COMP$^{\mu}_{\beta-\text{ITRM}}\cap\mathfrak{P}(\alpha)$.
\end{lemma}

\begin{proof}


We deal with the case $\mu=0$; the general case is an easy variation of this case. 

Let us say that an ordinal $\gamma$ is an $\omega_{1}^{L}$-index if and only if $(L_{\gamma+1}\setminus L_{\gamma})\cap\mathfrak{P}(\omega_{1}^{L})\neq\emptyset$. Let us write $\mathcal{I}$ for the set of $\omega_{1}^{L}$-indices. 

\begin{claim}
For each $\zeta\in\mathcal{I}$, $\mathcal{I}\setminus\zeta$ has order type strictly greater than $\omega_{1}^{L}$. 
\end{claim}
\begin{proof}
To see this, let, for $\iota\leq\omega_{1}^{L}$, $\xi_{\iota}$ denote the minimal ordinal for which $L_{\xi_{\iota}}\models$``There are at least $\iota$ many ZF$^{-}$-ordinals greater than $\zeta$''. 
Clearly, we have $\xi_{\iota}\in\mathcal{I}$ for all such $\iota$, and moreover, we have $\xi_{\iota_{0}}<\xi_{\iota_{1}}$ whenever $\iota_{0}<\iota_{1}\leq\omega_{1}^{L}$. 
\end{proof}

\begin{claim}
For each $\xi\in\mathcal{I}$, we also have $\beta(\xi)+1\in I$. 
\end{claim}
\begin{proof}
Let $\xi\in\mathcal{I}$. Let $x\subseteq\omega_{1}^{L}$ be such that $x\in(L_{\xi+1}\setminus L_{\xi})$. Consider the $\Sigma_{1}$-formula that states ``There are ordinals $\alpha,\beta$ such that $x\in (L_{\alpha+1}\setminus L_{\alpha})$ and, every $\alpha$-ITRM-computation halts or loops in less than $\beta$ many steps and $L_{\beta}$ believes that, for every $\gamma$, there is an $\alpha$-ITRM-computation that halts in $\gamma$ many steps''. Clearly, $L_{\beta(\alpha)+1}$ is the minimal $L$-level in which this formula is true. By a standard fine-structural argument, it follows that $L_{\beta(\alpha)+1}$ is an $\omega_{1}^{L}$-index. 
\end{proof}


\begin{claim}
If $\alpha$ is an $\omega_{1}^{L}$-index, then there is a parameter-freely $\alpha$-ITRM-computable set $c$ such that $c\in(L_{\alpha+1}\setminus L_{\alpha})\cap\mathfrak{P}(\omega_{1}^{L})$. 
\end{claim}
\begin{proof}

We use Lemma \ref{parameter free index}, where $C=\mathfrak{P}(\omega_{1}^{L})$. We thus need to check that the assumptions of this lemma are satisfied. 

First, since $\alpha$ is an $\omega_{1}^{L}$-index, $L_{\alpha}$ is not a model of ZF$^{-}$ and thus ITRM-singular by \cite{alpha itrms}, Lemma $24$.
 

Second, we need to see that $\mathfrak{P}(\omega_{1}^{L})$ is $\alpha$-ITRM-decidable without parameters. As in the proof of Lemma \ref{parameter free index}, there is a parameter-freely $\alpha$-ITRM-computable code $c\subseteq\alpha$ for $L_{\alpha}$ such that the coding function $f$ and its inverse, the decoding function are parameter-freely $\alpha$-ITRM-computable. By Lemma $25$ of \cite{alpha itrms}, there is a parameter-free $\alpha$-ITRM-program $T$ that evaluates truth of $\in$-formulas in $L_{\alpha}$. Let $\phi$ be the formula ``$x$ is the smallest uncountable cardinal''. We can now run through $\alpha$ and use $T$ to check, for each $\iota<\alpha$, whether $f(\iota)$ is the smallest uncountable cardinal in $L_{\alpha}$. Since $\alpha>\omega_{1}^{L}$, the answer will eventually be positive, and then we will have found the unique $\zeta<\alpha$ which codes $\omega_{1}^{L}$ in the sense of $c$, i.e., such that $f(\omega_{1}^{L})=\zeta$. Now, to check whether a given set $x\subseteq\alpha$ is in fact a subset of $\omega_{1}^{L}$, run through $\alpha$ and check, for every $\iota<\alpha$, whether $\iota\in x\rightarrow p(f(\iota),\zeta)\in c$. If this is the case for all $\iota<\alpha$, we return $1$, otherwise, we return $0$. 

\end{proof}

It is now easy to see that we cannot have COMP$^{0}_{\alpha-\text{ITRM}}\cap\mathfrak{P}(\omega_{1}^{L})\subseteq$COMP$^{0}_{\beta-\text{ITRM}}\cap\mathfrak{P}(\omega_{1}^{L})$ for all $\alpha,\beta\in\mathcal{I}$ with $\beta>\beta(\alpha)$: Otherwise, if $\eta$ is the $\omega_{1}^{L}$-th element of $\mathcal{I}$ -- which exists by the first claim -- COMP$^{0}_{\eta-\text{ITRM}}$ would have to contain some specific subset $y(\xi)\in(L_{\xi+1}\setminus L_{\xi})\cap\mathfrak{P}(\omega_{1}^{L})$ for each $\xi\in\mathcal{I}\upharpoonright\eta$, so we would have a constructible injection from $\omega_{1}^{L}$ into COMP$^{0}_{\eta-\text{ITRM}}$, contradicting the fact that, since there are only countable many programs, COMP$^{0}_{\eta-\text{ITRM}}$ is clearly countable in $L$. 


It follows that there are $\alpha,\beta\in\mathcal{I}$ such that $\beta>\beta(\alpha)$ and COMP$^{0}_{\alpha-\text{ITRM}}\cap\mathfrak{P}(\omega_{1}^{L})\not\subseteq$COMP$^{0}_{\beta-\text{ITRM}}\cap\mathfrak{P}(\omega_{1}^{L})$. Thus, some parameter-freely $\alpha$-ITRM-computable subset of $\omega_{1}^{L}$ is not $\beta$-ITRM-computable. 


However, by the third claim, there is a parameter-freely $\beta$-ITRM-computable subset $z$ of $\omega_{1}^{L}$ which is contained in $L_{\beta+1}\setminus L_{\beta}$. Since $\beta>\beta(\alpha)$ by assumption, we have in particular $z\notin L_{\beta(\alpha)}$, so $z$ is not $\alpha$-ITRM-computable (not even with parameters). Thus, we also have that some parameter-freely $\beta$-ITRM-computable subset of $\omega_{1}^{L}$ is not $\alpha$-ITRM-computable. 


Thus, the pair $(\alpha,\beta)$ is as desired.\footnote{Again, countable examples can be obtained from this by condensation arguments.}

For general $\mu$ we replace $\omega_{1}^{L}$ with the cardinal $L$-successor of $\mu$ in the above argument.
\end{proof}

However, when one only considers real numbers, the ordinals indeed form a linear hierarchy with respect to parameter-free ITRM-computability strength. This was proved for parameter-free $\alpha$-ITTMs in \cite{CRS}, Theorem 2.19; the argument for $\alpha$-ITRMs is essentially the same, but the limitations of register models -- such as the nonexistence of a universal machine -- lead to a few extra subtleties. 

\begin{thm}{\label{real comparable}}
[Cf. \cite{CRS}, Theorem 2.18] For each infinite ordinal $\alpha$, there is an ordinal $\rho_{0}(\alpha)$ such that COMP$^{0}_{\alpha-\text{ITRM}}=L_{\rho_{0}(\alpha)}\cap\mathfrak{P}(\omega)$. 
Consequently, the set $\{$COMP$^{0}_{\alpha}\cap\mathfrak{P}(\omega):\alpha\in\text{On}\}$\footnote{That this, in spite of being indexed with the class of ordinals, is a set rather than a proper class follows from the fact that it is clearly a subset of $\mathfrak{P}(\mathfrak{P}(\omega))$.} is linearly ordered by $\subseteq$. 
\end{thm}
\begin{proof}
It clearly suffices to prove the first claim. To this end, we will show that, for any $\alpha$ and any real number $x$, if $x$ is $\alpha$-ITRM-computable without parameters, then there is a parameter-freely $\alpha$-ITRM-computable real number $l(x)$ that codes an $L$-level $L_{\beta}\ni x$. Once this is achieved, the proof finishes as follows: If $y\in L_{\beta}\cap\mathfrak{P}(\omega)$, then there is a natural number $i$ that codes $y$ in the sense of $l(x)$. As a natural number, $i$ is parameter-freely computable (by a program that applies the successor operation $i$ many times). Now, to determine for an arbitrary $j\in\omega$ whether $j\in y$, we need to determine the natural number $n(k)$ that codes $j$ in the sense of $l(x)$ and then check whether $p(j,i)\in y$. Identifying $n(k)$ (uniformly in $k$) is easily seen to be possible already on an ITRM (for details, see \cite{CFKMNW})\footnote{To give a brief sketch: $n(0)$ can be identified as the only natural number that has no predecessor in the sense of $l(x)$ (i.e., $l(x)$ contains no element of the form $p(m,n(0))$). Then, recursively, $n(m+1)$ is the unique natural number which has $n(0),...,n(m)$ as its predecessors in the sense of $c$, and no others. In the same way, the natural number coding $\omega$ in the sense of $l(x)$ can also be identified.}, and thus on a parameter-free $\alpha$-ITRM for any $\alpha\geq\omega$. 

Now for the claim. Let $x\subseteq\omega$ be given, and let $P$ be an $\alpha$-ITRM-program that computes $x$ without parameters. By successively running $P(i)$ for all $i\in\omega$, we see that the supremum $\rho$ of the halting times of these computations is parameter-freely $\alpha$-ITRM-clockable; hence, so is $\rho+2$. Since $x$ is definable over $L_{\rho}$ as $\{i\in\omega:P(i)\downarrow=1\}$, we have $x\in L_{\rho+1}$. Moreover, $\rho+1$ is minimal such that $L_{\rho+1}$ believes in the existence of an ordinal $\rho$ such that $L_{\rho}\models\forall{i\in\omega}P(i)\downarrow$; hence, $\rho+1$ is an index and thus, by \cite{BP}, Theorem $1$, a real number $c$ coding $L_{\rho+1}$ is contained in $L_{\rho+2}$. The argument that $\alpha$-ITRM-clockable ordinals are also $\alpha$-ITRM-computable given in \cite{alpha itrms} (generalized from the one in \cite{CFKMNW}) makes no use of parameters and thus in fact shows that a subset of $\alpha$ coding $\rho+2$ is $\alpha$-ITRM-computable without parameters. 
By Lemma $41$ of \cite{alpha itrms}, there is a parameter-freely $\alpha$-ITRM-computable subset $a$ of $\alpha$ that codes $L_{\rho+2}$ (again, the argument makes no use of parameters). Now, given $c$, we can search through $\alpha$ for some $\iota$ which codes, in the sense of $c$, a subset $r$ of $\omega$ that codes a transitive model of $V=L$ which contains $x$: Being a subset of $\omega$ can be established as sketched in the last footnote. Again using the footnote, one can then use $c$ and $\iota$ to decide, for a given $i\in\omega$, whether $i\in r$. Checking the other properties -- coding a transitive model of $V=L$ that contains $x$ -- can then be done even on an ITRM, since these can perform well-foundedness checks by section $3$ of \cite{KM}, evaluate truth predicates in coded structures and check whether coded structures contain a given real number by \cite{CFKMNW}, and thus on an $\alpha$-ITRM whenever $\alpha\geq\omega$. 
%
%
%
%


\end{proof}

As observed in \cite{CRS}, Theorem 2.18(a) for parameter-free $\alpha$-Turing machines, it is not the case that greater ordinals also yield greater computability strength with respect to real numbers:

\begin{prop}
[Cf. \cite{CRS}, Theorem 2.18] There are ordinals $\omega<\alpha<\beta$ such that COMP$^{0}_{\alpha-\text{ITRM}}\cap\mathfrak{P}(\omega)\supsetneq$COMP$^{0}_{\beta-\text{ITRM}}\cap\mathfrak{P}(\omega)$. 
\end{prop}
\begin{proof}
Whenever $\alpha>\omega$, there is a parameter-free $\alpha$-ITRM program that halts with $\omega$ in its first register $R_{1}$: Using two auxiliar registers, increment $R_{1}$ by $1$ in every step, while the auxiliar registers initially contain $1$ and $0$, respectively, and swap their contents in every step. Halt when both of these registers contain $0$, which will happen at the first limit ordinal, i.e., $\omega$, which will then be the content of $R_{1}$. Using this, it is now easy to see that $\mathfrak{P}(\omega)$ is $\alpha$-ITRM-decidable for every $\alpha\geq\omega$. 

Now, whenever $\alpha$ is an index, i.e., such that $L_{\alpha+1}\setminus L_{\alpha}$ contains a real number, then $L_{\alpha}$ is not a model of ZF$^{-}$, so $\alpha$ is ITRM-singular by \cite{alpha itrms}, Lemma $24$. Hence, the assumptions of Lemma \ref{parameter free index} are satisfied, and it follows that $L_{\alpha+1}\setminus L_{\alpha}$ contains a parameter-freely $\alpha$-ITRM-computable real number. It is standard that index ordinals are unbounded in $\omega_{1}^{L}$. Given this, we cannot have COMP$^{0}_{\alpha-\text{ITRM}}\subseteq$COMP$^{0}_{\omega_{1}^{L}-\text{ITRM}}$ for all $\alpha<\omega_{1}^{L}$, since the latter set is still countable.\footnote{Again, countable counterexamples can now be obtained via condensation arguments.}

\end{proof}

\section{Cardinal-Recognizing ITRMs}

In \cite{Ha}, Habic considered a new way of conveying extra information to a transfinite computation by introducing ``cardinal-recognizing Infinite Time Turing Machines'' which assume a special inner state whenever the computation time reaches an infinite cardinal. This turned out to considerably increase the computational of ITTMs. 
The same is true for Ordinal Turing Machines: for example, \cite{C}, Exercise 4.4.6 shows that, if the set of real numbers in the universe is closed under the sharp operator, then so is the set of real numbers computable by cardinal-recognizing pOTMs. The idea of cardinal recognition can easily be adapted to register models. In this section, we will show that, perhaps surprisingly, for ITRMs, the ability to recognize cardinals is sterile: It does not change the set of computable objects. For general values of $\alpha$, we will see an increase in computational power due to the ability to recognize cardinals is equivalent to the solvability of the bounded halting problem and thus forms a further characterization in the BH-dichotomy.

\begin{defini}
Let $X$ be a class of ordinals. An $X$-recognizing $\alpha$-ITRM works like an $\alpha$-ITRM with an extra ``detection'' register $R_{D}$ that behaves as follows: Whenever the current computation time is contained in $X$, the content of $R_{D}$ is changed to $0$.\footnote{Note that this has the effect that $R_{D}$ contains $0$ at all times that are limits of elements of $X$. For our purposes, this effect is welcome, as the cases of $X$ relevant for us are closed under limits. If one wanted to avoid this, one could change $0$ to $1$ in the definition.} Let us write UCard for the class of uncountable cardinals. $\text{UCard}$-recognizing $\alpha$-ITRMs will be called ``cardinal-recognizing $\alpha$-ITRMs''. 


For an $\alpha$-ITRM-program $P$, we denote by $^{X}P$ the program run as an $X$-recognizing $\alpha$-ITRM, i.e., run with the modifications in the behaviour of $R_{D}$ just described.  

Moreover, for $\delta\in\text{On}$, let us write $M_{\delta}:=\{\delta\iota:\iota\in\text{On}\}$ for the set of multiples of $\delta$. We will abbreviate $M_{\omega_{k}^{\text{CK},x}}$ by $\mathcal{A}_{k}^{x}$ for $x\subseteq\omega$ and $k\in\omega$. 
\end{defini}

\begin{remark}
That we use UCard rather than Card has technical reasons: Otherwise, the first $\omega$ many steps would all be registered as cardinals, which is an unwanted behaviour that would lead to inconvenient special cases. It is easy to see that UCard-computations can be carried out on $\text{Card}$-recognizing $\alpha$-ITRMs by simply running for $\omega$ many ``empty'' steps before starting the ``actual'' computation, so that the modification is insubstantial.
\end{remark}

For ITTMs, it was observed Habic \cite{Ha} that cardinal-recognizing ITTMs can solve the halting problem for ITTMs. This is not true in general for $\alpha$-ITRMs. There is, however, a natural and useful variant, the proof for $\alpha$-ITRMs follows the same idea as in \cite{Ha}, and which we shall now sketch. 

\begin{prop}{\label{cardinals and ITRMs}}
For any $x\subseteq\alpha$ and any $\alpha$-ITRM-program $P$, $P^{x}$ will either halt in less than $\text{card}(\alpha)^{+}$ many steps or not halt at all. 
\end{prop}
\begin{proof}
It is proved in \cite{alpha itrms}, Theorem $37$ that $\beta(\alpha)$ is strictly smaller than the next $\Pi_{3}$-reflecting ordinal after $\alpha$; this result relativizes to oracles. Now, the next $\Pi_{3}$-reflecting ordinals after $\alpha$ is clearly smaller than the cardinal successor of $\alpha$.\footnote{This, of course, is an overkill. Alternatively, one can, assuming that $P^{x}$ halts in $\tau$ many steps, form the $\Sigma_{1}$-elementary hull $H$ of $\alpha+1\cup\{x\}$ in $L_{\tau+\omega}$; $H$ will contain the halting computation $D$ of $P^{x}$, and we will have $\text{card}(H)\leq\text{card}(\alpha)\cdot\omega$, so the transitive collapse of $H$ will only contain elements of cardinality less than $\text{card}(\alpha)^{+}$, which will include $D$.}
\end{proof}

\begin{prop}{\label{cardinals and halting}}
For each $k\in\omega$, $\alpha$-cITRMs can solve the halting problem for $\alpha$-ITRMs using $k$ registers, uniformly in the oracle. 
\end{prop}
\begin{proof}
Let $P$ be an $\alpha$-ITRM-program using $k\in\omega$ many registers, and let $x\subseteq\alpha$. We use an extra register $C$. Our $\alpha$-cITRM-program now works as follows: Use $k$ registers to run $P^{x}$, while simultaneously incrementing $C$ by $1$ for each step in the computation of $P^{x}$. Once $C$ overflows (i.e., contains $0$, set the content of $R_{D}$ to $1$ and let $P^{x}$ run on until it either halts -- in which case we return ``yes'' -- or the next cardinal time is reached, in which case $P^{x}$ has run for at least $\text{card}(\alpha)^{+}$ many steps without halting and thus will never halt, so that we can return ``no''. 
\end{proof}

\begin{remark}
Note that this does \textbf{not} mean that $\alpha$-cITRMs can solve the halting problem for $\alpha$-ITRMs. In fact, as Theorem \ref{cITRMs} shows, this fails already for $\alpha=\omega$. 
\end{remark}

Recall from the folklore that a ``strong loop'' in an infinite computation is a partial computation in which the first and the last state agree, and all states in between were in all components (active program line and register contents) at least as large as at this state. It is easy to see (see, e.g., \cite{KM}) that the presence of a strong loop implies that the program is not halting. Moreover, again by \cite{KM}, a non-halting program will always eventually run into a strong loop. 

Moreover, recall from \cite{alpha itrms} that the ``looping time'' of an $\alpha$-ITRM-program $P$ (possibly in some parameter and some oracle) is the minimal time $\tau$ such that the computation of $P$ up to $\tau$ contains a strong loop. It was shown in \cite{alpha itrms} that $\beta(\alpha)$, the supremum of the $\alpha$-ITRM-clockable ordinals, is also the supremum of the $\alpha$-ITRM-looping times. 

\begin{defini}
For an ordinal $\alpha$, $k\in\omega$, denote by $\delta_{\alpha}(k)$ the supremum of the $\alpha$-ITRM-halting and looping times (with parameters) for programs using at most $k$ registers. 
\end{defini}

\begin{prop}{\label{divisibility prop}}
For all infinite ordinals $\alpha$ and all $k\in\omega$, $\delta_{\alpha}(k)^{\omega}$ is a common multiple of all $\alpha$-ITRM-halting and looping times for programs using at most $k$ registers. 
\end{prop}
\begin{proof}
Let $\mu$ be the halting or looping time of some program using at most $k$ registers. Then $\mu<\delta_{\alpha}(k)$ by definition of $\delta_{\alpha}(k)$, and so $\delta_{\alpha}(k)^{\omega}\leq\mu\cdot\delta_{\alpha}(k)^{\omega}\leq\delta_{\alpha}(k)\cdot\delta_{\alpha}(k)^{\omega}=\delta_{\alpha}(k)^{\omega}$. 


\end{proof}

The following ``pulldown'' strategy, here adapted to register machines, is basic in the analysis of cardinal-recognizing ITTMs as conducted, e.g., in Habic \cite{Ha}. 

\begin{lemma}{\label{pulldown lemma}}
\begin{enumerate}[label=\roman*]
\item Let $P$ be an ITRM-program using $k\in\omega$ many registers. Then, for all $i\in\omega$ and all $x\subseteq\omega$, $^{\text{UCard}}P^{x}(i)$  halts if and only if  
$^{\mathcal{A}^{x}_{k+1}}P^{x}(i)$ halts and both computations will have the same output. 
\item More generally, let $\alpha$ be an ordinal, $k\in\omega$. 
Then, for all $\iota,\rho\in\alpha$ and all $x\subseteq\alpha$, $^{\text{UCard}}P^{x}(\rho,\iota)$  halts if and only if  
$^{M_{\delta_{\alpha}(k)^{\omega}}}P^{x}(\rho,\iota)$ halts and both computations will have the same output. 
\end{enumerate}
\end{lemma}
\begin{proof}
\begin{enumerate}
\item 
We claim that, for all $\iota,\delta\in\text{On}$ with $\iota>1$ and each $\rho<\omega_{k+1}^{\text{CK}}$, the states of $^{\text{UCard}}P^{x}(i)$ at time $\aleph_{\iota}+\omega_{k+1}^{\text{CK,x}}\cdot\delta+\rho$ agrees with that of $^{\mathcal{A}^{x}_{k+1}}P^{x}(i)$ at time $\omega_{k+1}^{\text{CK},x}\cdot(\iota+\delta)+\rho$, which implies the claim: For it follows in particular that, if $^{\text{Card}}P^{x}(i)$ is in the halting state at time $\aleph_{\iota}+\omega_{k+1}^{\text{CK},x}\cdot\delta+\rho$ and has $r$ in the output register, then the same will hold for $^{\mathcal{A}^{x}_{k+1}}P^{x}(i)$ at time $\omega_{k+1}^{\text{CK},x}(\iota+\delta)+\rho$, and vice versa.

To prove the claim, it suffices to see that the claim holds for $\delta=\rho=0$; for, if the states of the first computation at time $\aleph_{\iota}$ agrees with that of the second at time $\omega_{k+1}^{\text{CK},x}\cdot\iota$, then, as long as $\tau<\aleph_{\iota+1}$ (so that no cardinal-recognizing steps take place in the meantime), the state of the first computation at time $\aleph_{\iota}+\tau$ will also agree with that of the second at time $\omega_{k+1}^{\text{CK},x}\cdot\iota+\tau$. However, we know from Koepke \cite{K1}, Theorem $9$ that an ITRM-computation in the oracle $x$ using $k$ registers either halts in $<\omega_{k+1}^{\text{CK},x}$ many steps or runs into a strong loop of length $\omega_{k+1}^{\text{CK},x}$, so that the states at times of the form $\omega_{k+1}^{\text{CK},x}\cdot\iota$ will all be the same. Since $\aleph_{\iota}$ is uncountable and $\omega_{k+1}^{\text{CK},x}$ is countable, $\aleph_{\iota}=\omega_{k+1}^{\text{CK},x}\cdot\aleph_{\iota}$ is a multiple of $\omega_{k+1}^{\text{CK},x}$, so our claim is established. 

\item The general claim follows by an analogous argument.
\end{enumerate}
\end{proof}

\begin{lemma}{\label{clock lemma}}
\begin{enumerate}[label=\roman*]
\item For each ITRM-program $P$ and each $k\in\omega$, there is an ITRM-program $\tilde{P}$ which, for every $x\subseteq\omega$ computes the same function as $^{\mathcal{A}^{x}_{k}}P$. 
\item More generally, if $\alpha$ is exponentially closed\footnote{We assume exponential closure to be able to rely on the results from \cite{alpha itrms} used in the proof below. The result likely holds up without this assumption, at the price of some extra complications in the proof.} and BH, then, for every $\alpha$-ITRM-program $P$, each $k\in\omega$ and each parameter $\rho<\alpha$, there is an $\alpha$-ITRM-program $\tilde{P}$ such that $\tilde{P}(\rho)$ computes the same function as $^{M_{\delta_{\alpha}(k)^{\omega}}}P(\rho)$.
\end{enumerate}
\end{lemma}
\begin{proof}
\item It was shown in Koepke \cite{K1}, Theorem 10, that the supremum of the ITRM-clockable ordinals is $\omega_{\omega}^{\text{CK}}$, Moreover, it was shown in \cite{CFKMNW}, Theorem $6$ that the ITRM-clockable ordinals do not have gaps, so that they are exactly the elements of $\omega_{\omega}^{\text{CK}}$. Consequently, $\omega_{k}^{\text{CK}}$ is ITRM-clockable for every $k\in\omega$. 
For fixed $k$, pick an ITRM-program $Q$ that clocks $\omega_{k}^{\text{CK}}$. Let the program $P$ be given. Let us denote the ``detection'' register of $P$ by $R_{D}$ and let it initially contain $1$. The desired program $\tilde{P}$ now works as follows: Run $P$ and $Q_{k}$ simultaneously, by alternately carrying out single steps. 
When $Q_{k}$ halts, set $R_{D}$ to $0$, then reset the registers used by $Q_{k}$ to $0$ and start $Q_{k}$ again in the initial state. It is clear that this will work as desired at tlimes of the form $\omega_{k}^{\text{CK}}\times(\iota+1)$. 
However, at limit times, $R_{D}$ will also contain $0$, simply by the liminf rule for the register contents. 

\item If $L_{\alpha}\models\text{ZF}^{-}$, we know from \cite{alpha itrms} that a program using at most $k$ many registers halts or strongly loops in less than $\alpha^{k+1}$ many steps, so we can simply replace $\omega_{k}^{\text{CK}}$ by $\alpha^{k}$ in the case $\alpha=\omega$. 

The other case of general claim follows by the same strategy, once we have demonstrated the existence of a program $Q_{k}$ that clocks $\delta_{\alpha}(k)^{\omega}$. By assumption, there is program $H_{2k+r}$ that solves the halting problem for $\alpha$-ITRM-programs using at most $2k+r$ registers, where $r$ will be specified below. We can use this to implement a program that clocks $\delta_{\alpha}(k)$ by running through all pairs $(i,\rho)\in\omega\times\alpha$, using $H_{2k+r}$ to decide whether the computation of the $i$th program $P_{i}$ using at most $k$ registers in the parameter $\rho$ will halt. If it does, we run it until it halts. If it does not, we use $k+1$ additional registers to run through all $k$-tuples $\tau$ of elements of $\alpha$ and additionally all natural numbers $t$; for each such tuple $\tau$ and each such $t$, there is a program $S$ that runs $P_{i}(\rho)$ and waits for a strong loop with initial (and final) configuration $(t,\tau)$ (i.e., active program line $t$ and register contents $\tau$). It is easy to see that this can be done with a fixed extra number $r$ of registers. If such a loop is found, $S$ halts. Using $H_{2k+r}$, we can check whether $S$ will halt. If it does not, we know that $(t,\tau)$ does not start a strong loop in the computation in question, so we continue with the next configuration. This will eventually terminate and reveal the starting configuration $(t_{0},\tau_{0})$ of such a strong loop. Once this is found, we run $P_{i}(\rho)$ until the configuration $(t_{0},\tau_{0})$ appears for the second time. 

The routine just described halts at a time after all programs using at most $k$ many registers have either halted or run into a strong loop, i.e., after time $\delta_{\alpha}(k)$. It follows that $\delta_{\alpha}(k)$ is $\alpha$-ITRM-clockable. We still need to argue, though, that $\delta_{\alpha}(k)^{\omega}$ is also clockable. To see this, note that, since $L_{\alpha}\not\models\text{ZF}^{-}$, it now follows from \cite{alpha itrms} (Theorem 35) that $\delta_{\alpha}(k)$ is $\alpha$-ITRM-writable. Moreover, it is easy to see that there is an $\alpha$-ITRM-program $P_{\text{multiply}}$ such that, when $b$ and $c$ are $\alpha$-codese of ordinals $\beta$, $\gamma$, then $P_{\text{multiply}}(a,b)$ computes a code for $\beta\gamma$.\footnote{This can be done as follows: For $\iota_{0},\iota_{1},\xi_{0},\xi_{1}$, let $p(p(\iota_{0},\iota_{1}),p(\xi_{0},\xi_{1}))=1$ if and only if $p(\iota_{0},\xi_{0})\in a$ or $\iota_{0}=\xi_{0}$ and $p(\iota_{1},\xi_{1})\in b$. This is clearly $\alpha$-ITRM-computable.} Combining these two observations, it is easy to obtain a program $P_{\text{exp}}$ that, on input $n\in\omega$, computes an $\alpha$-code for $\delta_{\alpha}(k)^{n}$. Combining these codes into one to form a code for the sum of all these finite powers, we obtain that $\delta_{\alpha}(k)^{\omega}$ is $\alpha$-ITRM-computable. By Lemma 34 of \cite{alpha itrms}, it finally follows that $\delta_{\alpha}(k)^{\omega}$ is also $\alpha$-ITRM-clockable.

\end{proof}

\begin{thm}{\label{cITRMs}}
The computational strength of cITRMs is equal to that of ITRMs, i.e. COMP$_{\text{cITRM}}=$COMP$_{\text{ITRM}}=\mathfrak{P}(\omega)\cap L_{\omega_{\omega}^{\text{CK}}}$. This relativizes to oracles. 
\end{thm}
\begin{proof}
Clearly, the ITRM-computable subsets of $\omega$ are also cITRM-computable. 
The other direction is now an easy consequence of Lemma \ref{pulldown lemma} and Lemma \ref{clock lemma}: Given a real number $x\subseteq\omega$ computable by the cITRM-program $P$, suppose that $P$ uses $k$ registers. By Lemma \ref{pulldown lemma}, $x$ is computable by $^{\mathcal{A}_{k+1}}P$ and so, by Lemma \ref{clock lemma}, by some ITRM-program. 
\end{proof}

Similarly, we can extend the BH-dichotomy by a third criterion:

\begin{thm}{\label{extended bh}}
For each exponentially closed\footnote{Again, exponential closure is a technical convenience rather than a necessary assumption.} ordinal $\alpha$, the following are equivalent: 

\begin{enumerate}
\item There is no universal $\alpha$-ITRM (in the sense of Definition \ref{universal machine} above)
\item $\alpha$ is BH (i.e., for any $k\in\omega$, the halting problem for $\alpha$-ITRMs using $k$ registers is solvable by an $\alpha$-ITRM). 
\item The computational strength of $\alpha$-ITRMs is equal to that of $\alpha$-cITRMs. 
\end{enumerate}
\end{thm}
\begin{proof}
The equivalence of (1) and (2) is Theorem \ref{halting dichotomy}. We show that (2)$\Leftrightarrow$(2). 

Assume (3). By Proposition \ref{cardinals and halting},  the bounded halting problem for $\alpha$-ITRMs can be solved on $\alpha$-cITRMs, for any number $k$ of registers. By assumption, the same is true on $\alpha$-ITRMs. Hence, we have (2). 

We now show that (2)$\Rightarrow$(3). Assume that $\alpha$ is BH, and let $x\subseteq\alpha$ be $\alpha$-cITRM-computable. Let $P$ be an $\alpha$-ITRM-program, $k$ the number of its registers, $\rho<\alpha$ a parameter such that $^{\text{UCard}}P(\rho)$ computes $x\subseteq\alpha$. By Lemma \ref{pulldown lemma}, $x$ is also computed by $^{M_{\delta_{\alpha}(k)}^{\omega}}P(\rho)$. By Lemma \ref{clock lemma} there is an $\alpha$-ITRM-program $\tilde{P}$ such that $\tilde{P}(\rho)$ computes the same function as $^{M_{\delta_{\alpha}(k)}^{\omega}}P(\rho)$. Thus $x$ is $\alpha$-ITRM-computable. 

\end{proof}

\begin{remark}
Note again that the proof of Corollary \ref{extended bh} does not yield the relativization to oracles. Whether or not Corollary \ref{extended bh} holds relative to oracles is currently open. 
\end{remark}


\section{Iterations of $\alpha$-ITRM-computable operators}{Iterations of $\alpha$-ITRM-computable operators\footnote{This section is taken from our CiE 2022-paper \cite{C2}.}}{\label{iteration section}}



In \cite{alpha itrms}, it was proved that, if $L_{\alpha}\models$ZF$^{-}$, then the supremum of the $\alpha$-ITRM-clockable ordinals is $\alpha^{\omega}$. This situation, however, is rather special, and it was still consistent with the results obtained in \cite{alpha itrms} that the following natural generalization of Koepke's result on the computational strength of ITRMs (see \cite{K1}) holds:

\begin{conjecture}
Let $\alpha$ be an exponentially closed ordinal. Unless $L_{\alpha}\models$ZF$^{-}$, we have $\beta(\alpha)=\alpha^{+\omega}$.
\end{conjecture}

We will now show that this conjecture fails dramatically even for the first exponentially closed ordinal $\varepsilon_{0}=\omega^{\omega^{\omega^{...}}}$ greater than $\omega$. In fact, we will show that already $\beta(\omega^{\omega})$ is way bigger than $\omega_{\omega}^{\text{CK}}$.

\begin{defini}
Let $\alpha$ be an ordinal. We say that $F:\mathfrak{P}(\alpha)\rightarrow\mathfrak{P}(\alpha)$ is $\alpha$-ITRM-computable if and only if there is an $\alpha$-ITRM-program $P$ (possibly using a parameter $\gamma<\alpha$) such that, for all $x\subseteq\alpha$ and all $\iota<\alpha$, we have $P^{x}(\iota)\downarrow=1$ if and only if $\iota\in F(x)$ and otherwise $P^{x}(\iota)\downarrow=0$. In this situation, we also say that $P$ computes $F$. 
\end{defini}

\begin{defini}
For each infinite ordinal $\alpha$, pick an $\alpha$-ITRM-computable bijection $p_{\alpha}:\alpha\times\alpha\rightarrow\alpha$.\footnote{Since the set of $\alpha$-ITRM-computable subsets of $\alpha\times\alpha$ is a superset of $L_{\alpha+1}$, so that such a bijection is guaranteed to exist.}

Let $\alpha$ be an infinite ordinal, and let $F:\mathfrak{P}(\alpha)\rightarrow\mathfrak{P}(\alpha)$, $x\subseteq\alpha$. We define the iteration of $F$ along $\alpha$ as follows:
\begin{enumerate}
\item $F^{0}(x)=x$
\item $F^{\iota+1}(x)=F(F^{\iota}(x))$. 
\item When $\delta\leq\alpha$ is a limit ordinal, then $F^{\delta}(x)=\{p_{\alpha}(\iota,\xi):\iota<\delta,\xi<\alpha,\xi\in F^{\iota}(x)\}$.
\end{enumerate}
 
In addition 
we also write $F^{\beta\cdot k}$ for $(F^{\beta})^{k}$. 

\end{defini}

\begin{lemma}{\label{non-uniform finite iteration lemma}}
Let $\alpha$ be an ordinal, and let $F:\mathfrak{P}(\alpha)\rightarrow\mathfrak{P}(\alpha)$ be an $\alpha$-ITRM-computable function and let $n\in\omega$. Then $F^{n}$, the $n$-th iteration of $F$, is $\alpha$-ITRM-computable.
\end{lemma}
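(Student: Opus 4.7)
The plan is to proceed by induction on $n$, non-uniformly producing a program $Q_{n}$ computing $F^{n}$ for each fixed $n$. For $n=0$, we have $F^{0}(x)=x$, so $Q_{0}^{x}(\iota)$ simply issues an oracle query on $\iota$ and outputs the answer. This trivial base case is what forces the argument to be non-uniform: each inductive step will add a bounded number of registers, so for each $n$ we get a program with a fixed finite register budget, but that budget grows with $n$.

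For the inductive step, suppose $Q_{n}$ computes $F^{n}$ using $r_{n}$ registers, and let $P$ be the given program computing $F$, using $r$ registers. I would build $Q_{n+1}$ as a simulator: on input $\iota$ with oracle $x$, it runs $P(\iota)$ step by step on one block of $r$ dedicated registers, executing all non-oracle commands directly. Whenever the simulated $P$ executes an oracle command on a register holding some $\xi$, the program $Q_{n+1}$ saves that value into an auxiliary register, then invokes $Q_{n}$ as a subroutine on input $\xi$ using a separate block of $r_{n}$ fresh registers, with $Q_{n}$'s oracle queries forwarded to the actual oracle $x$. By the inductive hypothesis the subroutine returns $1$ iff $\xi\in F^{n}(x)$, which is precisely the answer $P$ would have received when run with oracle $F^{n}(x)$. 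The returned bit is written back into the appropriate register of $P$'s simulated state, and the simulation continues. When simulated $P$ halts, $Q_{n+1}$ halts with the same output. Correctness then follows because the simulated run of $P$ coincides, step for step, with $P^{F^{n}(x)}(\iota)$, so $Q_{n+1}^{x}(\iota)\downarrow =1$ iff $\iota\in F(F^{n}(x))=F^{n+1}(x)$.

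The register bookkeeping is the one thing to check carefully but presents no real obstacle: we need $r$ registers for the simulated copy of $P$, $r_{n}$ registers for running $Q_{n}$ as a subroutine, plus a constant number of auxiliary registers for the query value $\xi$, a simulated program counter for $P$, and scratch space to save and restore state across subroutine calls. Hence $r_{n+1}\le r+r_{n}+c$ for an absolute constant $c$, which is finite for every fixed $n$. The main (and genuinely mild) conceptual obstacle is simply to be explicit that, while the construction is effective in $n$, the resulting sequence of programs has no uniform register bound, which is exactly why the natural attempt to iterate this into the transfinite must proceed by a different route, of the sort developed in the subsequent section.
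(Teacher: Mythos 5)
Your proof is correct and follows essentially the same route as the paper's: induction on $n$, with the successor step realized by simulating the program for $F$ and replacing each oracle call by a subroutine call to the program for $F^{n}$, at the cost of a fresh block of registers (the paper bounds this by $r_{0}+r_{1}$ registers, matching your $r+r_{n}+c$). The only cosmetic difference is that the paper starts the induction at $n=1$ rather than $n=0$; your added remark on the non-uniformity of the register count is exactly the point the paper makes immediately after the lemma.
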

\begin{proof}
We prove this by induction. For $n=1$, there is nothing to show. Let $Q$ be an $\alpha$-ITRM-program that computes $F$ and let $Q_{n}$ be an $\alpha$-ITRM-program that computes $F^{n}$. 
Then an $\alpha$-ITRM-program $Q_{n+1}$ for computing $F^{n+1}$ works as follows: Run $Q$. Whenever $Q$ makes an oracle call to ask whether $\iota\in$F$^{n}(x)$, run $Q_{n}$ to evaluate this claim. When $Q$ uses $r_{0}$ many registers and $Q_{n}$ uses $r_{1}$ many registers, 
this can be implemented on an $\alpha$-ITRM using $r_{0}+r_{1}$ many registers. 
\end{proof}

The above iteration technique yields a new program for every iteration index $n$. The key for our main result is Lemma \ref{iteration lemma}, a uniform version of Lemma \ref{non-uniform finite iteration lemma}, which is our next goal. 

The following lemma is a standard application of ordinal arithmetic; as a coding device in infinite computability, it was already used by Koepke in \cite{K1}.

\begin{lemma}{\label{cnf liminf}}
Let $\alpha$ be an ordinal, $\delta$ be a limit ordinal, $(\gamma_{\iota}:\iota<\delta)$ a sequence of ordinals such that $\gamma_{\iota}<\alpha$ for each $\iota<\delta$, and let $\rho$, $\eta$ be arbitrary ordinals. 
Then $\text{liminf}_{\iota<\delta}\alpha^{\eta+2}\cdot\rho+\alpha^{\eta}\cdot\gamma_{\iota}=\alpha^{\eta+2}\cdot\rho+\alpha^{\eta}\cdot\text{liminf}_{\iota<\delta}\gamma_{\iota}$.
\end{lemma}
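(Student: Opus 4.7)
The plan is to reduce the identity to a general principle about ordinal arithmetic: any strictly increasing, continuous map $f$ on the ordinals commutes with the $\liminf$ of a transfinite sequence. For a nonempty set $S$ of ordinals, $\inf S$ is attained as $\min S$, so strict monotonicity yields $f(\inf S)=\min f(S)=\inf f(S)$; and continuity at limits (together with cofinality of $S$ in $\sup S$ in the non-attained case) yields $f(\sup S)=\sup f(S)$. Writing
\[
\liminf_{\iota<\delta} a_\iota \;=\; \sup_{\iota_0<\delta}\,\inf_{\iota_0\le\iota<\delta} a_\iota
\]
and pushing $f$ inside one layer at a time then gives $f(\liminf_\iota a_\iota)=\liminf_\iota f(a_\iota)$.

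Once this principle is in hand, I would apply it twice. First, take $f(x)=\alpha^{\eta}\cdot x$, which is strictly increasing and continuous in $x$ by standard properties of ordinal multiplication (noting $\alpha\ge 1$, which is forced by the existence of ordinals below $\alpha$); this gives
\[
\liminf_{\iota<\delta}\alpha^{\eta}\cdot\gamma_\iota \;=\; \alpha^{\eta}\cdot\liminf_{\iota<\delta}\gamma_\iota.
\]
Second, take $g(y)=\alpha^{\eta+2}\cdot\rho+y$, which is strictly increasing and continuous in $y$ by the analogous facts about ordinal addition; this gives
\[
\liminf_{\iota<\delta}\bigl(\alpha^{\eta+2}\cdot\rho+\alpha^{\eta}\cdot\gamma_\iota\bigr) \;=\; \alpha^{\eta+2}\cdot\rho+\liminf_{\iota<\delta}\alpha^{\eta}\cdot\gamma_\iota.
\]
Chaining the two identities yields the lemma.

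I do not anticipate a substantive obstacle: everything reduces to classical strict monotonicity and continuity at limits for ordinal addition and multiplication, together with the trivial observation that the infimum of a nonempty set of ordinals is a minimum. The hypothesis $\gamma_\iota<\alpha$ plays no role in the proof and is presumably present only to keep the expression $\alpha^{\eta+2}\cdot\rho+\alpha^{\eta}\cdot\gamma_\iota$ in Cantor normal form for the intended applications. The only mild care is needed at the $\sup$ step inside the $\liminf$, where one separates the case that the supremum is attained (handled by monotonicity) from the case that it is a proper limit (handled by continuity), but both cases are standard.
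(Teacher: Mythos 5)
Your argument is correct. The paper in fact leaves the proof of this lemma empty (it is treated as routine), so there is no ``paper approach'' to compare against; your write-up supplies exactly the missing verification. Reducing the identity to the fact that normal functions (here $y\mapsto\sigma+y$ and, for $\mu\ge 1$, $x\mapsto\mu\cdot x$) commute with $\liminf$ --- via $\inf=\min$ for the inner step and continuity/cofinality for the outer $\sup$ --- is clean and complete, and your side remark that the hypothesis $\gamma_{\iota}<\alpha$ is irrelevant to the identity itself (serving only to keep the expression in Cantor normal form in the intended application) is also accurate.
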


\begin{defini}
Let $\alpha,\beta$ be ordinals. We say that $\alpha$ is exponentially closed up to $\beta$ if and only if, for all $\gamma<\alpha$ and all $\iota<\beta$, we have $\gamma^{\iota}<\alpha$. 
\end{defini}

The following crucial observation is similar in spirit to the iteration lemma for infinite time Blum-Shub-Smale machines, see \cite{CG}, Lemma $10$. 

\begin{lemma}{\label{iteration lemma}}
Let $\alpha$ be closed under ordinal multiplication, and let $F:\mathfrak{P}(\alpha)\rightarrow\mathfrak{P}(\alpha)$ be $\alpha$-ITRM-computable. 
Moreover, let $\eta\in\text{On}$ be closed under ordinal addition. 
Then there is an $\alpha^{\eta}$-ITRM-program $P_{\text{iterate}}$ such that, for all $\iota<\eta$, $P_{\text{iterate}}^{x}(\iota)$ computes $F^{\iota}(x)$. More precisely, for all $\iota<\eta$, $\xi<\alpha$, we will have $P_{\text{iterate}}^{x}(\iota,\xi)\downarrow=1$ if and only if $\xi\in F^{\iota}(x)$ and $P_{\text{iterate}}^{x}(\iota,\xi)\downarrow=0$, otherwise. 
\end{lemma}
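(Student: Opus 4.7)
\emph{Approach.} Fix an $\alpha$-ITRM-program $Q$ that computes $F$, using $r$ registers. The plan is to design $P_{\text{iterate}}$ as a depth-first simulator of the recursive definition of $F^\iota(x)(\xi)$: in the base case $\iota=0$, query the oracle $x$ directly; for a successor $\iota=\gamma+1$, simulate $Q$ on input $\xi$ and replace each oracle query ``is $\nu\in F^\gamma(x)$?'' issued by $Q$ by a recursive evaluation of $P^x(\gamma,\nu)$; for a limit $\iota$, decode $\xi=p_\alpha(\gamma,\nu)$ via ordinal arithmetic, check that $\gamma<\iota$, and tail-recurse into $P^x(\gamma,\nu)$. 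Since each recursive call strictly decreases the iteration index, the recursion stack of paused $Q$-executions has finite depth at every moment of the simulation.

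\emph{Stack encoding.} I would encode this entire stack as a single ordinal stored in one $\alpha^\eta$-register. Each frame records the iteration index, the input $\xi$, the current program line of $Q$, and the $r$ register values of $Q$, each of these ordinals being less than $\alpha$; this makes a frame representable as a block of $c$ base-$\alpha$ CNF digits for some fixed $c$. A stack of depth $k$ is then encoded by an ordinal below $\alpha^{c\cdot k}$; since the depth is always finite and $\eta$ is closed under ordinal addition (so $\alpha^\omega\leq\alpha^\eta$), the encoded stack always fits inside $\alpha^\eta$. I would arrange the encoding so that the active (topmost) frame occupies the lowest-order digits and the rest of the stack lives in the higher-order digits: pushes multiply the encoded stack by $\alpha^c$, freezing the previous top into a higher block, while pops divide by $\alpha^c$. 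All these manipulations, together with reads and writes of fields inside the top frame, are implementable by a constant number of ordinal-arithmetic operations on the $\alpha^\eta$-ITRM.

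\emph{Limit stages and the main obstacle.} The principal difficulty is the interplay between the liminf semantics of the target $\alpha^\eta$-ITRM at its own limit times and the intended liminf semantics of $Q$ at each level of the iteration. Because the top frame is kept in the low-order digits, whenever the simulation is inside the top frame's $Q$-execution and reaches a limit time, the higher-order digits (encoding the deeper frames) are constant on a cofinal interval while only the low-order digits (carrying $Q$'s register values) vary; Lemma \ref{cnf liminf} then guarantees that the liminf of the encoded register acts componentwise, reproducing exactly the liminf that $Q$ would compute natively on an $\alpha$-ITRM. The main bookkeeping burden, and the step I expect to be the hardest, is verifying that this invariant persists across the push-and-pop cycles triggered by $Q$'s oracle queries: one must check that each completed recursive sub-call returns the encoded register to the ``top frame in the low-order digits, deeper frames frozen above'' form before $Q$ issues its next query, so that the componentwise liminf analysis can be applied locally at every limit stage of every level of the simulation.
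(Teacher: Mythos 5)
Your overall strategy --- simulate the finite recursion stack of $Q$-runs by packing stack frames into the base-$\alpha$ Cantor normal form digits of large ordinals and invoke Lemma~\ref{cnf liminf} at limits --- is the same as the paper's, but two of your design choices break the limit-stage analysis. The more serious one is packing the \emph{entire} frame (program line, input, and all $r$ register values of $Q$) into a single block of adjacent digits of one $\alpha^{\eta}$-register. The liminf rule of the host machine is applied to that one packed ordinal, whereas the semantics of $Q$ require the liminf to be taken \emph{separately} in each register and in the program-line counter. These do not agree: already for $\alpha=\omega$ and a frame $\omega\cdot l_t+v_t$, the sequence $\omega\cdot 1+0,\ 0+5,\ \omega\cdot 1+0,\ 0+5,\dots$ has liminf $5$, while the componentwise liminfs $l=0$, $v=0$ encode $0$. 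This is exactly why the paper keeps one dedicated $\alpha^{\eta}$-register $R_i'$ for the stack of values of $Q$'s register $R_i$ and a separate register $L$ for the stack of program lines: then at a limit only the single lowest-order digit of each host register varies, which is the one situation Lemma~\ref{cnf liminf} covers.

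The second problem is indexing digit positions by stack \emph{depth}, with pushes multiplying the encoded stack by $\alpha^{c}$ and pops dividing by it. Then the digits of the paused frames are \emph{not} ``constant on a cofinal interval'': if $Q$ at some level issues an oracle call at every step, the deeper frames are shifted up and back down cofinally before the limit, so the invariant you rely on for the componentwise liminf fails exactly at the limits where you need it (one can try to salvage this by noting that the shallow-stack configurations are cofinal and give smaller ordinals, but that is a different and more delicate argument than the one you state, and you flag it as unresolved). The paper avoids the issue entirely by indexing positions by the \emph{iteration level} $\delta$ (digit $\alpha^{\delta\cdot 2}$): since nested calls strictly decrease the level, the active frame automatically sits at the lowest occupied position, and a push or pop never moves any other frame, so paused digits are literally constant. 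The factor $2$ in the exponent additionally leaves room to detect and reset a simulated register whose liminf overflows to $\alpha$ (it becomes $\alpha^{\delta\cdot 2+1}$, still below the next frame's position) --- a case your proposal does not treat at all. I would recommend reworking your encoding along these two lines; the skeleton of your recursion (base, successor, limit decoding via $p_{\alpha}$) is otherwise the right one.
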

\begin{proof}
Let $P$ be an $\alpha$--ITRM-program that computes $F$. 
Suppose that $P$ uses $n$ registers $R_{1},...,R_{n}$. The program $P_{\text{iterate}}$ will use registers $R_{1}^{\prime},...,R_{n}^{\prime}$ for simulating the register contents of $P$, a register $L$ for storing active program lines 
and various auxiliar registers that will not be mentioned explictly. 


The rough idea is this: When $\delta<\eta$ is a limit ordinal, the question whether $\xi\in F^{\delta}(x)$ can be decided by writing $\xi$ as $\xi=p_{\alpha}(\xi_{0},\xi_{1})$ and then deciding whether $\xi_{1}\in F^{\xi_{0}}(x)$; we will have $\xi_{0}<\xi$. 
To compute $F^{\iota+1}(x)$ for a given $\iota<\alpha$, $P_{\text{iterate}}$ will run $P$ in the oracle $F^{\iota}(x)$. This may again call $P$ for a lower iterate etc. Since $\alpha$ is well-founded, however, the nesting depth will remain finite at all times. 
At any time of this computation, there will be a configuration $(l^{\iota},r_{1}^{\iota},...,r_{n}^{\iota})$ corresponding to the outermost run of $P$, along with finitely many configuration $(l^{\xi_{1}},r_{1}^{\xi_{1}},...,r_{n}^{\xi_{1}})$ corresponding to the first iteration etc., up to $(l^{0},r_{1}^{0},...,r_{n}^{0})$ for the top iteration which works on input $x$ directly. 
 The program $P_{\text{iterate}}$ will store this by having $\alpha^{\iota\cdot 2}\cdot l^{\iota}+\alpha^{\xi_{1}\cdot 2}\cdot l^{1}+...+\alpha^{0}\cdot l^{0}$ in $L$ and $\alpha^{\iota\cdot 2}\cdot r_{i}^{\iota}+...+\alpha^{0}\cdot r_{i}^{0}$ in $R_{i}^{\prime}$. When the topmost computation terminates, it is taken off the stack and the computation ``below'' it is continued.


We now do it precisely. Suppose that $x\subseteq\alpha$ is given in the oracle, and that some ordinal $\iota<\eta$ is given in the first register. Our goal is to compute $F^{\iota}(x)$. 

The computation proceeds in $\iota+1$ many ``levels'', where a computation step takes place at level $\xi\leq\iota$ when it belongs to an evaluation of $F^{\xi}$. When an oracle call of the form $\mathcal{O}(\zeta)$ is made in level $\xi+1$, the computation enters level $\xi$; when it takes place in level $\delta$ with $\delta$ a limit ordinal and $\zeta$ is of the form $p_{\alpha}(\zeta_{0},\zeta_{1})$, the computation continues at level $\zeta_{0}$ with the computation of $F^{\zeta_{0}}(\zeta_{1})$. For the sake of convenience, we use a register $S$ for storing the sequence $(\xi_1,...,\xi_k)$ of currently relevant levels in the form $\alpha^{2\xi_1}+...+\alpha^{2\xi_k}$, where, of course $\xi_1>\xi_2>...>\xi_k$. 

We now describe how to carry out instructions at level $\delta\leq\iota$ (all contents of registers other than the ones explicitly mentioned are left unchanged). Note that $\delta$ can be reconstructed from the content of the line register $L$, the content of which will be of the form $\alpha^{\gamma\cdot 2}\cdot\rho+\alpha^{\delta\cdot 2}\cdot l$ with $\gamma>\delta$ and $l>0$ (since, as we recall from the introduction, we start the enumeration of program lines with $1$). The $l$ appearing as the coefficient in this representation will be the index of a program line of $P$; depending on the content of this program line, the following steps are carried out: 

\begin{itemize}
\item (Before carrying out the other steps:) When $R_{i}$ contains an ordinal of the form $\alpha^{\gamma\cdot 2}\cdot\rho+\alpha^{\delta\cdot 2+1}$ for any $i\leq n$, replace it with $\alpha^{\gamma\cdot 2}\cdot\rho$ (this corresponds to a reset after a register overflow). 

\item The active program line contains the command $R_{i}\leftarrow R_{i}+1$: Read out the content of $R_{i}^{\prime}$. It will be an ordinal of the form $\alpha^{\gamma\cdot 2}\cdot\rho+\alpha^{\delta\cdot 2}\cdot r_{i}$ with $\gamma>\delta$; replace it with $\alpha^{\gamma\cdot 2}\cdot\rho+\alpha^{\delta\cdot 2}\cdot(r_{i}+1)$. 
Moreover, the content of $L$ will be an ordinal of the form $\alpha^{\gamma\cdot 2}\cdot\rho^{\prime}+\alpha^{\delta\cdot 2}\cdot l$; replace it with $\alpha^{\gamma\cdot 2}\cdot\rho^{\prime}+\alpha^{\delta\cdot 2}\cdot (l+1)$.

\item The active program line contains the command COPY$(i,j)$: Read out the contents of $R_{i}$ and $R_{j}$, which will be of the forms $\alpha^{\gamma_{0}\cdot 2}\cdot\rho+\alpha^{\delta\cdot 2}\cdot r_{i}$ and $\alpha^{\gamma_{1}\cdot 2}\cdot\rho^{\prime}+\alpha^{\delta\cdot 2}\cdot r_{j}$, where $\delta<\gamma_{0},\gamma_{1}$. 
Replace the content of $R_{i}$ with $\alpha^{\gamma_{0}\cdot 2}\cdot\rho+\alpha^{\delta\cdot 2}\cdot r_{j}$; modify the content of $L$ as in the incrementation operation.

\item The active program line contains the command IF $R_{i}=R_{j}$ GOTO $l$: Read out the contents of $R_{i}$ and $R_{j}$, which will be of the forms $\alpha^{\gamma_{0}\cdot 2}\cdot\rho+\alpha^{\delta\cdot 2}\cdot r_{i}$ and $\alpha^{\gamma_{1}\cdot 2}\cdot\rho^{\prime}+\alpha^{\delta\cdot 2}\cdot r_{j}$, where $\gamma_{0},\gamma_{1}>\delta$; moreover, 
let $\alpha^{\gamma_{2}\cdot 2}\cdot\rho^{\prime\prime}+\alpha^{\delta\cdot 2}\cdot l^{\prime}$ be the content of $L$, where $\delta<\gamma_{2}$. If $r_{i}=r_{j}$, replace the content of $L$ with $\alpha^{\gamma_{2}\cdot 2}\cdot\rho^{\prime\prime}+\alpha^{\delta\cdot 2}\cdot l$; if not, replace 
it with $\alpha^{\gamma_{2}\cdot 2}\cdot\rho^{\prime\prime}+\alpha^{\delta\cdot 2}\cdot (l^{\prime}+1)$.

\item The active program line contains the oracle call $\mathcal{O}(\xi)$ and $\delta=\bar{\delta}+1<\iota$ is a successor ordinal:  Let $\alpha^{\gamma_{0}\cdot 2}\cdot\rho+\alpha^{\delta\cdot 2}\cdot r$ be the content of $R_{1}$, and let $\alpha^{\gamma_{1}\cdot 2}\cdot\rho^{\prime}+\alpha^{\delta\cdot 2}\cdot l$ be the content of $L$, where $\delta<\gamma_{0},\gamma_{1}$. 
Replace the content of $R_{1}$ by $\alpha^{\gamma_{0}\cdot 2}\cdot\rho+\alpha^{\delta\cdot 2}\cdot r+\alpha^{\bar{\delta}\cdot 2}\cdot \xi$ and replace the content of $L$ by $\alpha^{\gamma_{1}\cdot 2}\cdot\rho^{\prime\prime}+\alpha^{\delta\cdot 2}\cdot l+\alpha^{\bar{\delta}\cdot 2}\cdot 1$. 
Also, we are now working at level $\bar{\delta}$, so we add $\alpha^{\bar{\delta}\cdot 2}$ to the content of $S$. 

\item  The active program line contains the oracle call $\mathcal{O}(\xi)$ and $\delta<\iota$ is a limit ordinal: Calculate $\xi_{0}$, $\xi_{1}$ with $\xi=p_{\alpha}(\xi_{0},\xi_{1})$. If $\xi_{0}\geq\delta$, return $0$ and modify the content of $L$ as in the incrementation operation. (Note that this output will be right due to the definition of the iteration at limit levels). 
If $\xi_{0}<\delta$, we need to check whether $\xi_{1}\in F^{\xi_{0}}(x)$. 
 The computation will then enter level $\xi_{0}$. Thus, we add $\alpha^{\xi_{0}\cdot 2}$ to the content of $S$.  Let $\alpha^{\gamma_{0}\cdot 2}\cdot\rho+\alpha^{\delta\cdot 2}\cdot r$ be the content of $R_{1}$, and let $\alpha^{\gamma_{1}\cdot 2}\cdot\rho^{\prime}+\alpha^{\delta\cdot 2}\cdot l$ be the content of $L$, where $\delta<\gamma_{0},\gamma_{1}$. 
Replace the content of $R_{1}$ by $\alpha^{\gamma_{0}\cdot 2}\cdot\rho+\alpha^{\delta\cdot 2}\cdot r+\alpha^{\xi_{0}\cdot 2}\cdot\xi_{1}$ and replace the content of $L$ by $\alpha^{\gamma_{1}\cdot 2}\cdot\rho^{\prime\prime}+\alpha^{\delta\cdot 2}\cdot l+\alpha^{\xi_{0}\cdot 2}\cdot 1$.

\item  The active program line contains the oracle call $\mathcal{O}(\xi)$ and $\delta=0$: This means that we are simply making a call to the given oracle, with no iterations of $F$ applied to it. Let $\alpha^{\gamma_{0}\cdot 2}\cdot\rho+\alpha^{\delta\cdot 2}\cdot r$ be the content of $R_{1}$. Check whether $\xi\in x$ (recall that $x$ is our oracle). If yes, replace the content of $R_{1}$ by $\alpha^{\gamma_{0}\cdot 2}\cdot\rho+\alpha^{\delta\cdot 2}\cdot 1$, otherwise, 
replace the content of $R_{1}$ by $\alpha^{\gamma_{0}\cdot 2}\cdot\rho$.  Modify the content of $L$ as in the incrementation operation.

\item When the coefficient of the minimal power of $\alpha$ in the Cantor normal form representation of the content of $L$ is the index of a line of $P$ that contains the ``halt'' command: 
Let $R_{1}$ contain $\alpha^{\gamma_{0}\cdot 2}\cdot\rho^{\prime}+\alpha^{\gamma_{1}\cdot 2}\cdot r+\alpha^{\delta\cdot 2}\cdot r^{\prime}$; replace it with $\alpha^{\gamma_{0}\cdot 2}\cdot\rho^{\prime}+\alpha^{\gamma_{1}\cdot 2}\cdot r^{\prime}$ (the result of the oracle call is passed down to the level that made the call).

For $i\in\{2,...,n\}$, let $R_{i}$ contain $\alpha^{\gamma_{0,i}\cdot 2}\cdot\rho_{i}+\alpha^{\gamma_{1,i}\cdot 2}\cdot r_{i}+\alpha^{\delta\cdot 2}\cdot r_{i}^{\prime}$; replace it with 
$\alpha^{\gamma_{0,i}\cdot 2}\cdot\rho_{i}+\alpha^{\gamma_{1,i}\cdot 2}\cdot r_{i}$ (the topmost layer corresponding to the now finished computation is deleted).

Also, if the content of $S$ is $\alpha^{\nu}\cdot\rho+\alpha^{\delta\cdot 2}$, replace it with $\alpha^{\nu}\cdot\rho$ (the last entry in the sequence of currently relevant levels is deleted). 

Finally, let the content of $L$ be $\alpha^{\gamma_{0}\cdot 2}\cdot\rho^{\prime\prime}+\alpha^{\gamma_{1}\cdot 2}\cdot l+\alpha^{\delta\cdot 2}\cdot l^{\prime}$; replace it by $\alpha^{\gamma_{0}\cdot 2}\cdot\rho^{\prime\prime}+\alpha^{\gamma_{1}\cdot 2}\cdot (l+1)$ (the active program line is increased by $1$, as the oracle command has been carried out). 
\end{itemize}

$P_{\text{iterate}}$ now works on input $(\iota,\xi)\in\eta\times\alpha$ by first instantiating $L$ with $\alpha^{\iota\cdot 2}$, $R_{1}$ with $\alpha^{\iota\cdot 2}\cdot\xi$ and $R_{i}$ with $0$ for $i\in\{2,3,...,n\}$ and then carrying out the above instructions. By additive closure of $\eta$, we will have $\gamma\cdot 2<\eta$ whenever $\gamma<\eta$, so that all register contents generated in this procedure will be below $\alpha^{\eta}$. By induction on $\iota$ and using Lemma \ref{cnf liminf}, the program works as desired. 
\end{proof}


We note some important consequences of this result:

\begin{corollary}{\label{full iteration lemma}}
Let $\alpha>\omega$ be exponentially closed, and let $\beta<\alpha$. Moreover, let $F:\mathfrak{P}(\beta)\rightarrow\mathfrak{P}(\beta)$ be a $\beta$-ITRM-computable operator. Then:
\begin{enumerate}
\item 
 There is an $\alpha$-ITRM-program $P$ such that, for each $x\subseteq\beta$ and each $\iota<\alpha$, $P^{x}(\iota)$ computes $F^{\iota}(x)$. 
\item $F^{\alpha}$, the $\alpha$-th iteration of $F$, is $\alpha$-ITRM-computable.
\item $F^{\alpha\cdot i}$, the $\alpha\cdot i$-th iteration of $F$, is $\alpha$-ITRM-computable, for every $i\in\omega$. 
\end{enumerate}
\end{corollary}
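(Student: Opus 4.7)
Since $\alpha$ is exponentially closed and $\beta<\alpha$, I would first pick a multiplicatively closed ordinal $\gamma$ with $\beta\leq\gamma<\alpha$ (for instance $\gamma=\omega^{\omega^{\beta+1}}$, which lies below $\alpha$ by exponential closure) and extend $F$ trivially to a $\gamma$-ITRM-computable operator $\tilde{F}\colon\mathfrak{P}(\gamma)\to\mathfrak{P}(\gamma)$. Part 1 then follows from Lemma \ref{iteration lemma} applied with $\gamma$ in place of the lemma's $\alpha$ and with $\eta=\alpha$, which is additively closed because it is multiplicatively closed: the lemma delivers a $\gamma^{\alpha}$-ITRM-program, and exponential closure of $\alpha$ together with $\gamma<\alpha$ gives $\gamma^{\alpha}=\alpha$, so this is already an $\alpha$-ITRM-program. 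Part 2 then falls out of the limit-level clause of the iteration: to decide $\mu\in F^{\alpha}(x)$, decode $\mu=p(\iota,\xi)$ via $p^{-1}$ and invoke the program from part 1 on $(\iota,\xi)$.

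For part 3 I would induct on $i$, with the base case $i=1$ handled by part 2. In the inductive step, suppose $F^{\alpha\cdot i}$ is computed by an $\alpha$-ITRM-program $P_{i}$. To decide $\mu\in F^{\alpha(i+1)}(x)$, decode $\mu=p(\iota,\xi)$: if $\iota<\alpha\cdot i$ the condition coincides with $\mu\in F^{\alpha\cdot i}(x)$ by the limit-level definition and is settled by calling $P_{i}$, and otherwise $\iota=\alpha\cdot i+\iota'$ for some $\iota'<\alpha$ and I must decide $\xi\in F^{\alpha\cdot i+\iota'}(x)$. For this I would adapt the construction of Lemma \ref{iteration lemma} with $\eta=\alpha$, iterating $F$ for $\iota'$ many steps starting from the base $F^{\alpha\cdot i}(x)$. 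The Cantor-normal-form register encoding and the bound $\gamma^{\alpha}=\alpha$ on register contents are identical to those in the original proof, so the simulation still fits in an $\alpha$-ITRM; the only modification is the handling of oracle calls at a limit level $\delta$ of the sub-iteration. Such a query, after decoding as $p(\mu_{0},\mu_{1})$, is dispatched to $P_{i}$ when $\mu_{0}<\alpha\cdot i$ and is continued as a recursive sub-iteration at level $\mu_{0}'$ when $\mu_{0}=\alpha\cdot i+\mu_{0}'$ with $\mu_{0}'<\delta$, mirroring the original construction.

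The main obstacle is a compositional subtlety at limit levels: $F^{\alpha\cdot i+\iota'}(x)$ does not coincide with $F^{\iota'}(F^{\alpha\cdot i}(x))$ when $\iota'$ is a limit, since the limit-level definition of an iterate references the full history back to $x$ rather than only back to the base $F^{\alpha\cdot i}(x)$. This is exactly what prevents one from deriving part 3 by a naive iteration of $F^{\alpha}$ via Lemma \ref{non-uniform finite iteration lemma}; the rerouted oracle calls at limit levels described above are the device for reconciling the two histories, with $P_{i}$ serving as the oracle for the pre-base portion of the history while the current sub-iteration handles the post-base portion.
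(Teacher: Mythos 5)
Your argument for parts 1 and 2 follows the paper's route: part 1 is Lemma \ref{iteration lemma} applied with $\eta=\alpha$, and part 2 is the observation that membership in $F^{\alpha}(x)$ reduces, via the limit clause of the iteration, to a single call of the part-1 program. Your explicit passage to a multiplicatively closed $\gamma$ with $\beta\leq\gamma<\alpha$ is a worthwhile addition: the paper applies the lemma with base $\beta$, which need not satisfy the lemma's hypothesis of closure under multiplication, so this repair (together with the computation $\gamma^{\alpha}=\alpha$) is exactly what is needed. For part 3 you genuinely diverge: the paper simply applies Lemma \ref{non-uniform finite iteration lemma} to the operator $F^{\alpha}$ obtained in part 2, implicitly identifying $F^{\alpha\cdot i}$ with $(F^{\alpha})^{i}$, whereas you build a direct simulation with rerouted oracle calls. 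Your motivating observation is correct as stated: since the limit clause refers to the entire history back to $x$, $F^{\alpha\cdot i+\iota'}(x)$ is not literally $F^{\iota'}(F^{\alpha\cdot i}(x))$ at limit $\iota'$, so $F^{\alpha\cdot i}\neq(F^{\alpha})^{i}$ as sets. However, the two are uniformly interreducible: the $\nu$-th iterate started at $F^{\alpha\cdot j}(x)$ differs from $F^{\alpha\cdot j+\nu}(x)$ only in the pre-base portion of the history, which can be read off $F^{\alpha\cdot j}(x)$ itself. So the paper's two-line argument survives with a one-line recoding remark, and at the sketch level of the paper this is presumably what is intended. What your longer construction buys is the literal iterate $F^{\alpha\cdot i}$ rather than a recoding of it; note, though, that for $i\geq 2$ the indices $\iota<\alpha\cdot i$ exceed the domain of $p_{\alpha}$, so the literal $F^{\alpha\cdot i}$ is not even a well-defined subset of $\alpha$ without first fixing a pairing of $(\alpha\cdot i)\times\alpha$ into $\alpha$, whereas the paper's nested coding $(F^{\alpha})^{i}$ sidesteps this issue entirely. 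Both routes are acceptable at the paper's level of rigor; yours is more careful about the limit-level semantics, the paper's is shorter and yields a cleaner coding.
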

\begin{proof}
\begin{enumerate}
\item  Since $\beta^{\iota+1}<\alpha$ by exponential closure of $\alpha$, is a direct consequence of Lemma \ref{iteration lemma}. 


\item In order to decide whether $p_{\alpha}(\xi_{0},\xi_{1})\in F^{\alpha}(x)$, use the algorithm $P$ from (1) to decide whether or not $\xi_{1}\in F^{\xi_{0}}(x)$. 

\item This is a consequence of (2) and Lemma \ref{non-uniform finite iteration lemma}. 
\end{enumerate}
\end{proof}

We now extract information on $\beta(\alpha)$, for various values of $\alpha$, thus, in particular, refuting the conjecture mentioned above that $\beta(\alpha)=\alpha^{+\omega}$ unless $L_{\alpha}\models\text{ZF}^{-}$. 

\begin{defini}
Let $\alpha$ be an ordinal. By recursion, we define, for $\iota\in\text{On}$: $^{0}\alpha=\alpha$, $^{\iota+1}\alpha=\alpha^{^{\iota}\alpha}$, $^{\iota}\alpha=\bigcup_{\xi<\iota}{}^{\xi}\alpha$ for $\iota$ a limit ordinal. 

As usual, we denote $^{\omega}\omega$ by $\varepsilon_{0}$. 
\end{defini}

Recall the following result from Koepke and Miller \cite{KM}:

\begin{defini}[Cf., e.g., \cite{Sa}, p. 48]
Let $x\subseteq\omega$. The hyperjump of $x$ is the set of all $i\in\omega$ such that the $i$-th Turing program computes a well-ordering in the oracle $x$.  
For $\iota<\varepsilon_{0}$, denote by HJ$^{\iota}(x)$ the $\iota$-th hyperjump of $x$; HJ$^{\iota}$ denotes the $\iota$-th hyperjump of $0$.
\end{defini}

\begin{thm}{\label{itrm hyperjump}}[See \cite{KM}, Theorem $1$]
There is an ITRM-program $P_{\text{hj}}$ such that, for each $x\subseteq\omega$, $P_{\text{hj}}^{x}$ computes HJ$(x)$. 
\end{thm}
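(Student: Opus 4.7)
The plan is to reduce computing the hyperjump to two standard capabilities of ITRMs: deciding the halting problem for oracle Turing machines, and testing well-foundedness of binary relations on $\omega$.

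Given the oracle $x\subseteq\omega$ and input $i\in\omega$, interpret $i$ as the index of a Turing program $P_{i}$. To decide whether $i\in\text{HJ}(x)$, we must check that $P_{i}^{x}$, read as computing a relation $R_{i}\subseteq\omega\times\omega$ via the values of $P_{i}^{x}(\langle j,k\rangle)$, yields a well-ordering of $\omega$. The first ingredient is that an ITRM can simulate an oracle Turing machine step by step while maintaining a step counter: since any halting Turing computation does so in fewer than $\omega$ steps, and otherwise necessarily revisits a configuration which can be detected in the limit, the ITRM can decide $P_{i}^{x}(\langle j,k\rangle)\downarrow$ for each pair $(j,k)$. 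Looping through all pairs, we first verify that $P_{i}^{x}$ is total on pair-codes and always outputs $0$ or $1$; if not, output $0$.

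Assuming totality, we use further halting tests to check that $R_{i}$ is a linear order. Reflexivity, antisymmetry, transitivity and trichotomy each amount to verifying countably many halting queries, each of which is decidable by the previous step. Finally, we apply the ITRM well-foundedness test to $R_{i}$: by iteratively extracting least elements and maintaining rank information in the registers, the ITRM produces a terminating computation precisely when $R_{i}$ is well-founded and otherwise detects a descending chain. Output $1$ iff both the linearity check and the well-foundedness test succeed.

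The main obstacle is the well-foundedness subroutine, which is where the full transfinite time available to an ITRM is genuinely used and which essentially captures the strength of the model; by contrast, the simulation of oracle Turing machines and the linearity checks are routine adaptations of classical register-machine techniques. Some care is also needed to interleave the countably many simulations of $P_{i}^{x}$ on distinct inputs within the ITRM's finite register discipline, but standard coding by the natural number pairing function suffices.
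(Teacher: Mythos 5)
The paper offers no proof of this statement at all: it is explicitly ``recalled'' from Koepke and Miller \cite{KM}, and the theorem environment is followed by nothing. So there is no argument of the paper's to compare yours against. What you have written is the standard reduction underlying the cited result: simulate $P_{i}^{x}$ to extract the coded relation, verify totality and linearity, then invoke ITRM-decidability of well-foundedness. That decomposition is right, and you correctly identify the well-foundedness test as the place where the strength of the model is actually used.

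Two of your supporting claims are wrong as stated, and the second one matters. First, a non-halting Turing machine need not revisit a configuration (its tape can grow without bound), so loop detection is not how an ITRM decides Turing halting; the correct mechanism is that the register coding the simulated configuration (or a step counter) grows unboundedly and is reset to $0$ at the limit stage, which the machine detects with a flag. Second, and more seriously, your description of the well-foundedness subroutine --- ``iteratively extracting least elements and maintaining rank information in the registers'' --- cannot be implemented: ITRM registers hold natural numbers, whereas the ranks in question are arbitrary countable ordinals, and the set of already-extracted elements is a real, not a natural number. The actual Koepke--Miller argument is a backtracking search for an infinite descending sequence, with the search stack coded into a single register; ill-foundedness manifests as unbounded growth of that register along the leftmost descending branch and is detected via the overflow/reset behaviour at limit times, while well-foundedness guarantees the search terminates. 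Since this test is essentially the entire content of the theorem, invoking it in a form that does not work leaves your proposal without its key ingredient; the rest of your sketch is routine and fine.
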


%

\begin{corollary}
\begin{enumerate}
\item For any $n\in\omega$, the function $F_{n}:x\mapsto\text{HJ}_{\omega^{n}}(x)$, defined on $\mathfrak{P}(\omega)$, is $\omega^{\omega^{n}}$-ITRM-computable. 
\item For $n\in\omega$, we have $\beta(\omega^{\omega^{n}})\geq\omega_{\omega^{n+1}}^{\text{CK}}$. In particular, we have $\beta(\omega^{\omega})\geq\omega_{\omega^{2}}^{\text{CK}}$.
\item We have $\beta(\varepsilon_{0})\geq\omega_{\varepsilon_{0}\cdot\omega}^{\text{CK}}$. 
\end{enumerate}
\end{corollary}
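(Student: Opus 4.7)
The plan is to establish (1), (2), (3) in order, each building on the previous via the iteration machinery of Lemma~\ref{iteration lemma} and its corollaries. For (1), I would apply Lemma~\ref{iteration lemma} with $\alpha = \omega$ (closed under multiplication), $F = \text{HJ}$ (which is $\omega$-ITRM-computable by Theorem~\ref{itrm hyperjump}), and iteration bound $\eta = \omega^{n}$, which is additively indecomposable. The lemma then produces an $\omega^{\omega^{n}}$-ITRM program that, uniformly in $\iota < \omega^{n}$ and $\xi$, decides whether $\xi \in \text{HJ}^{\iota}(x)$. The value $F_{n}(x) := \text{HJ}^{\omega^{n}}(x)$ itself is read off via the limit clause of the iteration definition: membership $z \in \text{HJ}^{\omega^{n}}(x)$ is decoded as $z = p(\iota, \xi)$ with $\iota < \omega^{n}$ and answered by a single call to the uniform iteration subroutine, giving (1).

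For (2), (1) makes $F_{n}$ an $\omega^{\omega^{n}}$-ITRM-computable operator, so Lemma~\ref{non-uniform finite iteration lemma} supplies, for each fixed $m \in \omega$, an $\omega^{\omega^{n}}$-ITRM-program computing $F_{n}^{m}$. A straightforward induction on $m$ shows that $F_{n}^{m}(\emptyset)$ encodes the hyperjump iterates up to stage $\omega^{n} \cdot m$, so the standard admissibility correspondence (the transfinite analogue of the fact underlying Theorem~\ref{itrm hyperjump}) allows extraction of a code for $\omega_{\omega^{n} \cdot m}^{\text{CK}}$. Hence $\beta(\omega^{\omega^{n}}) \geq \omega_{\omega^{n} \cdot m}^{\text{CK}}$ for every $m \in \omega$, and passing to the supremum in $m$ yields $\beta(\omega^{\omega^{n}}) \geq \omega_{\omega^{n+1}}^{\text{CK}}$; the ``in particular'' case is $n = 1$.

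Part (3) is cleaner since $\varepsilon_{0}$ is exponentially closed and $\omega < \varepsilon_{0}$: Corollary~\ref{full iteration lemma}(3) applies directly with $\alpha = \varepsilon_{0}$, $\beta = \omega$, $F = \text{HJ}$, giving that the $i$-fold iterate of $\text{HJ}^{\varepsilon_{0}}$ is $\varepsilon_{0}$-ITRM-computable for every $i \in \omega$. The same admissibility argument as in (2) shows this iterate codes up to $\omega_{\varepsilon_{0}\cdot i}^{\text{CK}}$, so $\omega_{\varepsilon_{0} \cdot i}^{\text{CK}} \leq \beta(\varepsilon_{0})$ for each $i$, and the supremum in $i$ delivers $\beta(\varepsilon_{0}) \geq \omega_{\varepsilon_{0} \cdot \omega}^{\text{CK}}$.

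The step I expect to require the most care is the admissibility correspondence invoked in (2) and (3): one must verify that $F_{n}^{m}(\emptyset)$, respectively the $i$-fold iterate of $\text{HJ}^{\varepsilon_{0}}$ applied to $\emptyset$, genuinely codes a well-ordering of length at least the claimed Church--Kleene ordinal, and that such a code can be read out effectively by an $\alpha$-ITRM of the appropriate strength. This is the transfinite extension of the fact underlying Theorem~\ref{itrm hyperjump}, and the bookkeeping between the paper's aggregated limit-stage iteration semantics and the admissible hierarchy is the only genuinely non-routine point.
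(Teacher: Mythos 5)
Your proposal is correct, and for parts (2) and (3) it follows the paper's argument essentially verbatim: finite iteration via Lemma~\ref{non-uniform finite iteration lemma} (resp.\ Corollary~\ref{full iteration lemma}), extraction of codes for the corresponding admissible ordinals from the iterated hyperjumps, and a supremum. Where you genuinely diverge is part (1). The paper proceeds by induction on $n$: assuming $F_{n}$ is $\omega^{\omega^{n}}$-ITRM-computable, it applies Lemma~\ref{iteration lemma} with $\alpha=\omega^{\omega^{n}}$ and $\eta=\omega$ to compute $F_{n}^{i}$ uniformly in $i<\omega$ on an $(\omega^{\omega^{n}})^{\omega}=\omega^{\omega^{n+1}}$-ITRM, and then observes that $F_{n}^{\omega}=F_{n+1}$. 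You instead make a single application of the lemma with $\alpha=\omega$ and $\eta=\omega^{n}$, obtaining all iterates $\text{HJ}^{\iota}(x)$ for $\iota<\omega^{n}$ uniformly on an $\omega^{\omega^{n}}$-ITRM and reading off the $\omega^{n}$-th iterate via the limit clause. Both are legitimate uses of the lemma and land on the same machine strength; your route is shorter (no induction), while the paper's induction buys one small convenience: at each step the limit clause is invoked only at $\delta=\omega$, so the pairing $p_{\omega}:\omega\times\omega\rightarrow\omega$ suffices, whereas your direct route requires the limit-stage set $\text{HJ}^{\omega^{n}}(x)$ to be coded by a pairing of $\omega^{n}\times\omega$ into $\omega$ whose decoding is $\omega^{\omega^{n}}$-ITRM-computable. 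That is harmless (any standard ordinal pairing works, and the paper is equally informal about such recodings), but it is the one piece of bookkeeping your version must supply that the paper's does not. Your identification of the admissibility correspondence as the delicate point is consistent with the paper, which likewise asserts without proof that a code for $\omega_{\omega^{n}\cdot k}^{\text{CK}}$ can be extracted from $\text{HJ}_{\omega^{n}\cdot k}$.
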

\begin{proof}
\begin{enumerate}

\item We prove this by induction. For $n=0$, this is Theorem \ref{itrm hyperjump}. Now suppose that $x\mapsto\text{HJ}_{\omega^{n}}(x)$ is $\omega^{\omega^{n}}$-ITRM-computable, say by the program $P_{n}$. By Lemma \ref{iteration lemma}, there is an $(\omega^{\omega^{n}})^{\omega}$-ITRM-program $Q$ that computes $F_{n}^{i}(x)$ on input $i\in\omega$; note 
that $(\omega^{\omega^{n}})^{\omega}=\omega^{\omega^{n+1}}$. 
 By running $Q(i,j)$ on input $p_{\omega}(i,j)$, we obtain an $\omega^{\omega^{n+1}}$-ITRM-program $Q^{\prime}$ that computes $F_{n}^{\omega}(x)$ in the oracle $x$. But $F_{n}^{\omega}$ is just $F_{n+1}$. 

\item From (1), we have that HJ$_{\omega^{n}}$ is $\omega^{\omega^{n}}$-ITRM-computable; using Lemma \ref{non-uniform finite iteration lemma}, we obtain that HJ$_{\omega^{n}\cdot k}$ is $\omega^{\omega^{n}}$-ITRM-computable for every $k\in\omega$. Therefore, a code for $\omega_{\omega^{n}\cdot k}^{\text{CK}}$ is $\omega^{\omega^{n}}$-ITRM-computable for every $k\in\omega$. 
Consequently, the supremum $\beta(\omega^{\omega^{n}})$ of the ordinals with $\omega^{\omega^{n}}$-ITRM-computable codes is at least $\omega_{\omega^{n+1}}^{\text{CK}}$.


\item By Theorem \ref{itrm hyperjump} and Corollary \ref{full iteration lemma}, HJ$_{\varepsilon_{0}\cdot k}$ is $\varepsilon_{0}$-ITRM-computable for any $k\in\omega$. 
Thus, $\beta(\varepsilon_{0})$ is larger than $\omega_{\varepsilon_{0}\cdot k}^{\text{CK}}$ for any $k\in\omega$, and thus $\beta(\varepsilon_{0})\geq\omega_{\varepsilon_{0}\cdot\omega}^{\text{CK}}$. 
\end{enumerate}
\end{proof}

The same approach works in a much more general situation: 

\begin{defini}
Let us say that $\alpha$ is ITRM-countable if and only if there is an $\alpha$-ITRM-computable bijection $f:\omega\rightarrow\alpha$. 
More generally, let us say that $\alpha$ is ITRM-effectively $\beta$-codable if and only if there is an $\alpha$-ITRM-computable bijection $f:\beta\rightarrow\alpha$.
\end{defini}

\begin{remark}
In particular, since subsets definable over $L_{\alpha}$ can always be computed on an $\alpha$-ITRM, $\alpha$ is ITRM-countable whenever $\alpha$ is an index (i.e., an ordinal $\alpha$ such that $(L_{\alpha+1}\setminus L_{\alpha})\cap\mathfrak{P}(\omega)\neq\emptyset$). 
Note that ITRM-countability implies that there is an $\alpha$-ITRM-computable real number that codes $\alpha$. 
\end{remark}

\begin{corollary}{\label{itrm countable}}
Let $\alpha>\omega$ be exponentially closed and ITRM-countable. Then $\beta(\alpha)\geq\alpha^{+\alpha\cdot\omega}$. 
\end{corollary}
\begin{proof}
 Let $x\subseteq\omega$ be an $\alpha$-ITRM-computable code for $\alpha$. By applying Corollary $8$ to $x$ and the ($\omega$-)ITRM-program that computes hyperjumps from Theorem \ref{itrm hyperjump}, we see that HJ$^{\alpha\cdot k}(x)$ is $\alpha$-ITRM-computable for every $k\in\omega$. 
But then, we have $\beta(\alpha)\geq\alpha^{+\alpha\cdot k}$ for every $k\in\omega$, i.e., $\beta(\alpha)\geq\alpha^{+\alpha\cdot\omega}$. 
\end{proof}


Besides raw size, one can also obtain some structural information on $\beta(\alpha)$ from these considerations. It was shown in Proposition $40$ of \cite{alpha itrms} that $\beta(\alpha)$ is never admissible and that, if $\alpha$ is an index, then $\beta(\alpha)$ is a limit of admissible ordinals. This can now be considerably strengthened. 

\begin{defini}
Let $\alpha$ be an ordinal.
We say that $\alpha$ is a level $0$ limit of admissible ordinals if and only if $\alpha$ is admissible. 
For $\iota\in\text{On}$, $\alpha$ is a level $\iota+1$ limit of admissible ordinals if and only if $\alpha$ is a limit of level $\iota$ limits of admissible ordinals. 
For $\delta\in\text{On}$ a limit ordinal, $\alpha$ is a level $\delta$ limit of admissible ordinals if and only if $\alpha$ is a level $\iota$ admissible ordinal for all $\iota<\delta$. 

We write Lev$(\alpha,\xi)$ to express that $\alpha$ is a level $\xi$-limit of admissible ordinals. 

Moreover, we write $[\alpha]^{\xi}_{\text{Adm}}$ for the smallest level $\xi$ limit of admissible ordinals that is strictly larger than $\alpha$ (thus, $[\alpha]^{0}_{\text{Adm}}=\alpha^{+}$) and $[\alpha]^{\xi,\iota}_{\text{Adm}}$ for the $\iota$-th smallest such limit. 
\end{defini}

\begin{lemma}{\label{computable limit iterations}}
For each $\iota\in\text{On}$, there is an $\omega^{\omega^{\iota}}$-ITRM-program $P_{\iota-\text{limit}}$ which, given a real number coding an ordinal $\alpha$, computes a real number coding $[\alpha]^{\iota}_{\text{Adm}}$. 
\end{lemma}
\begin{proof}
We prove the claim by induction on $\iota$. The case $\iota=0$ is just the fact that ($\omega$-)ITRMs can compute hyperjumps (and thus admissible successors). 
The limit case is trivial, since $\omega^{\omega^{\delta}}$-ITRMs can simulate $\omega^{\omega^{\iota}}$-ITRMs for all $\iota<\delta$ (uniformly on input $\iota$). We are thus left with the successor case. So suppose that the $\omega^{\omega^{\iota}}$-ITRM-program $P_{\iota-\text{limit}}$ is given, which computes a function $F$ as in the lemma. By Lemma \ref{iteration lemma}, we can compute the $\omega$-th iterate of $F$ on an $(\omega^{\omega^{\iota}})^{\omega}$-ITRM, i.e., on an $\omega^{\omega^{\iota+1}}$-ITRM. Given a real number $c$ coding an ordinal $\gamma$, this yields a real number that encodes 
$[\gamma]^{\iota,i}_{\text{Adm}}$ for all $i\in\omega$. From $c$, one recursively (in the classical sense, and uniformly in $c$) obtains a real number coding the ordinal sum $\rho:=\Sigma_{i\in\omega}[\gamma]^{\iota,i}_{\text{Adm}}$, which is equal to $[\gamma]^{\iota+1}_{\text{Adm}}$. 
\end{proof}

\begin{corollary}{\label{beta alpha limit}}
Let $\alpha>\omega$ be exponentially closed and ITRM-countable. 
Then $\beta(\alpha)$ is a level $\alpha$ limit of admissible ordinals. 
\end{corollary}
\begin{proof}
It follows from the ITRM-countability of $\alpha$ that in fact every ordinal smaller than $\beta(\alpha)$ has an $\alpha$-ITRM-computable real code: For, if $f:\omega\rightarrow\alpha$ is an $\alpha$-ITRM-computable bijection, then so is $f^{-1}:\alpha\rightarrow\omega$ and so, if $x\subseteq\alpha$ is any $\alpha$-ITRM-computable set coding an ordinal $\rho$, then $\{f^{-1}(\iota):\iota\in x\}$ is an ITRM-computable real number which also codes $\rho$. 

Since $\alpha$ is exponentially closed, $\alpha$ is a limit ordinal. Let $\iota<\alpha$. 
Hence, if $\gamma<\beta(\alpha)$, there is an $\alpha$-ITRM-computable real number $c$ that codes $\gamma$. By Lemma \ref{computable limit iterations}, $\hat{\gamma}:=[\gamma]^{\iota}_{\text{Adm}}$ is $\omega^{\omega^{\iota}}$-computable from the input $c$; since $\omega^{\omega^{\iota}}<\alpha$ by exponential closure, $\hat{\gamma}$ has an $\alpha$-ITRM-computable code, so $\hat{\alpha}<\beta(\alpha)$ and, by definition, $\hat{\alpha}$ is a level $\iota$ limit of admissible ordinals. Since $\gamma$ was arbitrary, $\beta(\alpha)$ must be a limit of such ordinals. 

\end{proof}

Analyzing the proof of Corollary \ref{itrm countable} -- and the auxiliary results leading there -- one notes that the iteration technique just described never leads to a register overflow, so that the lower bounds just obtained in fact hold true already for the $\alpha$-wITRMs as well:

\begin{corollary}{\label{wITRM countable}}
Let $\alpha>\omega$ be exponentially closed and wITRM-countable. Then $\beta^{w}(\alpha)\geq\alpha^{+\alpha\cdot\omega}$. 
\end{corollary}

 In the case $\alpha=\omega$, it is known that $\alpha$-ITRMs are far stronger than $\alpha$-wITRMs: Namely, the wITRM-computable subsets of $\omega$ are exactly those in $L_{\omega_{1}^{\text{CK}}}$, while the ITRM-computable ones are those in $L_{\omega_{\omega}^{\text{CK}}}$.  As we just noted, the techniques for obtaining lower bounds just described are insensitive to the distinction between resetting and unresetting machines. This leads to the following question:

\begin{question}
Are there any exponentially closed\footnote{Note that the examples given in Proposition \ref{weak equals strong} above are far from being exponentially closed.} values of $\alpha$ such that $\beta(\alpha)=\beta^{w}(\alpha)$, i.e. such that the computational strength of $\alpha$-ITRMs is the same as that of $\alpha$-wITRMs?
\end{question}

\subsection{Uncountable $\alpha$}

The lower bounds obtained from the iteration lemma above can only work when $\alpha$ is countable. In this section, we indicate how Abramson's and Sacks' ``lifting'' of results of Gostanian \cite{Go} on Gandy ordinals to the uncountable in \cite{AS} can be exploited to yield information on $\alpha$-ITRM-computability for certain uncountable values of $\alpha$. 
For the sake of brevity, simplicity and surveyability, we restrict ourselves to the case $\alpha=\aleph_{\omega}^{+}$ treated in \cite{AS}; further generalizations are deferred to later work. (The argument would equally well work for $(\aleph_{\omega}^{L})^{+}$.) 

In \cite{AS}, the authors prove that $\aleph_{\omega}^{+}$ is Gandy, i.e., that the supremum of the $\aleph_{\omega}^{+}$-recursive ordinals is $(\aleph_{\omega}^{+})^{+}$. Clearly, $\alpha$-recursive sets are also $\alpha$-ITRM-computable, and so this implies that $\beta(\aleph_{\omega}^{+})\geq(\aleph_{\omega}^{+})^{+}$; indeed, this much was observed in \cite{alpha itrms}. However, in order to use the strength of the iteration lemma, this is not enough: rather than being able to go from $\aleph_{\omega}^{+}$ to $(\aleph_{\omega}^{+})^{+}$, we would need a uniform way -- i.e., an $\alpha$-ITRM-program -- that allows us to go from some $x\subseteq\alpha$ that codes a well-ordering to $\omega_{1}^{\text{CK},x}$, i.e., the smallest ordinal $\beta>\alpha$ such that $L_{\beta}[x]$ is admissible. 

Such a program can indeed be obtained from the proof of Theorem $5$ of \cite{AS} by a relativization of the construction; we will offer a brief sketch of the general strategy and the necessary adaptations. 

We use the following generalization of Theorem $1$ of \cite{KM}: 


\begin{lemma}{\label{infinite branch computation}}[See \cite{C}, Theorem 2.3.25]
If $\alpha$ is ITRM-singular, then there is an $\alpha$-ITRM-program $P_{\text{ifs}}$ (``ill-founded sequence'') such that, for any $x\subseteq\alpha$ that codes a tree $\mathcal{T}$ on $\alpha$, $P_{\text{ifs}}^{x}$ outputs $\emptyset$ when $\mathcal{T}$ is well-founded and otherwise outputs an infinite branch of $\mathcal{T}$.\footnote{More precisely, $P_{\text{ifs}}^{x}(i)$ will output the $i$-th element of an infinite branch of $\mathcal{T}$, for every $i\in\omega$.}
\end{lemma}

\begin{lemma}{\label{well-founded part computation}}
If $\alpha$ is ITRM-singular, then there is an $\alpha$-ITRM-program $P_{\text{wfp}}$ (``well-founded part'') such that, for any $x\subseteq\alpha$ that encodes a structure $(X,E)$, $P^{x}$ computes a subset of $\alpha$ that codes the well-founded part of $X$ with respect to $E$.
\end{lemma}
\begin{proof}
This follows from Lemma \ref{infinite branch computation} by cutting off the given structure $(X,E)$ below any given $x$ and applying the well-foundedness check to determine whether there is an infinite $E$-decreasing sequence that starts with $x$.
\end{proof}


The general strategy in \cite{AS} is the following: They define an $\aleph_{\omega}^{+}$-recursive tree $\mathcal{T}$, guaranteed to have an infinite branch, whose infinite branches encode -- possibly ill-founded -- models of KP for which $\aleph_{\omega}^{+}$ belongs to the well-founded part. Since well-founded parts of admissible sets are known to be admissible, it follows that 
the height of the well-founded part of such a model must be of height at least $(\aleph_{\omega}^{+})^{+}$, from which one obtains the Gandyness of $\aleph_{\omega}^{+}$. 

It is not hard to modify their construction to obtain, for a given $x\subseteq\aleph_{\omega}^{+}$, a tree $\mathcal{T}_{x}$ that is uniformly $\aleph_{\omega}^{+}$-ITRM-computable in the oracle $x$, has at least one infinite branch and whose infinite branches encode models of KP whose well-founded part includes $\aleph_{\omega}^{+}$ and $x$. All that is required is to add, in the proof of Theorem $5$ of \cite{AS}, a new variable $\chi$ to the language $\mathcal{L}^{*}$ and the statements $\{d_{\gamma}\in\chi:\gamma\in x\}\cup\{d_{\gamma}\notin\chi:\gamma\notin\chi\}$ to the theory $\mathcal{T}^{*}$ and modify condition (viii) to demand that $(V,G)\in L_{\aleph_{\omega}^{+}}[x]$. The proof that the tree arising in this way has an infinite branch and that one obtains a model with the required properties from each infinite branch then works as in \cite{AS}. Now, by Lemma \ref{infinite branch computation}, we can uniformly compute a code $b\subseteq\aleph_{\omega}^{+}$ for such a branch on an $\aleph_{\omega}^{+}$-ITRM in $\mathcal{T}^{*}$. From $b$, one can then easily obtain a code $m\subseteq\aleph_{\omega}^{+}$ that encodes a model of KP with $\aleph_{\omega}^{+}$ and $x$ in its well-founded part. We can then use Lemma \ref{well-founded part computation} to compute a code $w\subseteq\aleph_{\omega}^{+}$ for the well-founded part of $m$. Using bounded truth predicate evaluation (see, e.g., \cite{C}, Theorem 2.3.28) in $m$, this yields a code for the set of ordinals in $m$, which will be a code of an ordinal $\geq\omega_{1}^{\text{CK},x}$. 


Since this works for any $x\subseteq\aleph_{\omega}^{+}$, it is now possible to proceed as above to obtain the following:

\begin{thm}
We have $\beta((\aleph_{\omega}^{+})^+)\geq(\aleph_{\omega}^{+})^{+(\aleph_{\omega}^{+}\cdot\omega)}$. 
\end{thm}



\section{Open Questions}

While the above refutes a natural conjecture on the computational strength of $\alpha$-ITRMs by providing some lower bounds, the value of $\beta(\alpha)$ is still unknown for all values of $\alpha$ unless $\alpha=\omega$ or $L_{\alpha}\models$ZF$^{-}$. Some special cases that might be good starting points would be 
to determine $\beta(\omega^{\omega})$, $\beta(\varepsilon_{0})$, $\beta(\aleph_{\omega})$ or $\beta(\omega_{1}^{\text{CK}})$. 



A crucial feature of $\omega$-ITRMs established by Koepke and Miller in \cite{KM}, the generalization of which may well shed light on the computational power of $\alpha$-ITRMs, is the solvability of the bounded halting problem. Although we are able to prove that, for each ordinal $\alpha$, there is either a universal $\alpha$-ITRM-program or the bounded halting problem for $\alpha$-ITRMs is solvable, we are in a quite unsatisfying situation: We do not know which alternative holds for a single exponentially closed ordinal $\alpha$ except when $\alpha=\omega$ or when $L_{\alpha}\models$ZF$^{-}$ 
which alternative holds. A crucial step in further work on the computational strength of $\alpha$-ITRMs might be to generalize the work on the cases $\alpha=\omega$ and $L_{\alpha}\models$ZF$^{-}$ by seeing whether the computational strength of $\alpha$-ITRMs can be characterized by iterating some operator 
that is $\beta$-ITRM-computable for some $\beta\leq\alpha$. We also currently do not know whether there are values of $\alpha$ for which the lower bounds obtained in this paper are optimal. We expect that proof-theoretical considerations on iterated admissibility and inductive operators such as J\"ager \cite{Jaeger} and \cite{BFPS} will become relevant in further investigations. 

For the time being, we thus restrict ourselves to the following rather modest questions: 

\begin{question}
Determine $\beta(\alpha)$ or $\beta^{w}(\alpha)$ for any value of $\alpha$ other than $\alpha=\omega$ or $\alpha$ a ZF$^{-}$-ordinal. 
\end{question}

\begin{question}
Characterize the $u$-weak ordinals, i.e., those for which $\beta^{w}(\alpha)=\alpha$ (and thus, COMP$_{\alpha-\text{wITRM}}=\Delta_{1}(L_{\alpha})$).
\end{question}

\section{Acknowledgements}

We thank the three anonymous referees of \cite{C2} for their valuable feedback, in particular for pointing out several subtle typos.

%
%
%
 \bibliographystyle{splncs04}
%


\begin{thebibliography}{8}
\bibitem[AM]{AM} B. Monin, P. Angles d'Auriac. Genericity and Randomness with Ittms. Journal of Symbolic Logic, vol. 84(4) (2019)
\bibitem[AS]{AS} F. Abramson, G. Sacks. Uncountable Gandy Ordinals. Journal of the London Mathematical Society, vol. s2-14(3) (1976)
\bibitem[BFPS]{BFPS} W. Buchholz, S. Feferman, W. Pohlers, W. Sieg (eds.). Iterated inductive definitions and subsystems of analysis: recent proof-theoretical studies. Lecture Notes in Mathematics 897, Springer (1981)
\bibitem[BP]{BP} G. Boolos, H. Putnam. Degrees of Unsolvabllity of constructible sets of integers, J. Symbolic Logic 33 (1968)
    \bibitem[C]{C} M. Carl. Ordinal Computability. An Introduction to Infinitary Machines. De Gruyter Series in Logic and Its Applications (2019) 
    \bibitem[CFKMNW]{CFKMNW} M. Carl, T. Fischbach, P, Koepke, R. Miller, M. Nasfi, G. Weckbecker. The basic theory of Infinite Time Register Machines. Archive for Mathematical Logic 49 (2010) 2, 249-273. 
    \bibitem[C1]{alpha itrms} M. Carl. Taming Koepke's Zoo II: Register Machines. Annals of Pure and Applied Logic 173 (3):103041 (2022)
   \bibitem[C2]{C2} M. Carl.  Lower Bounds on $\beta(\alpha)$. In: U. Berger, J. Franklin, F. Manea, A. Pauly. (eds) Revolutions and Revelations in Computability. CiE 2022. Lecture Notes in 
Computer Science, vol 13359. Springer, Cham. \url{https://doi.org/10.1007/978-3-031-08740-0_6}
\bibitem[C3]{C3} M. Carl. Effectivity and reducibility with ordinal Turing machines. Computability vol. 10(4) (2021)
\bibitem[C4]{C4} M. Carl. Space and time complexity for infinite time Turing machines. Journal of Logic and Compution vol. 30 (2020)
\bibitem[C5]{C5} M. Carl. Space-bounded OTMs and REG$^{\infty}$. Computability, vol. 11(1) (2022)
   \bibitem[CG]{CG} M. Carl, L. Galeotti. Resetting Infinite Time Blum-Shub-Smale-Machines. Preprint. arXiv:2001.07133v2  (2020) 
\bibitem[CGP]{CGP} M. Carl, L. Galeotti, R. Passmann. Realisability for Infinitary Intuitionistic Set Theory. Preprint. arXiv:2009.12172  (2020)
\bibitem[CH]{CH} S. Coskey, J. Hamkins. Infinite time decidable equivalence relation theory. Notre Dame Journal of Formal Logic, vol. 52(2) (2011) 
\bibitem[COW]{COW} M. Carl, S. Ouazzani, P. Welch. Taming Koepke's Zoo. In: F. Manea et al. (dds.) Sailing Routes in the World of Computation. Lecture Notes in Computer Science 10936 (2017), pp. 126-135
\bibitem[CRS]{CRS} M. Carl, P. Schlicht, B. Rin. Reachability for infinite time Turing machines with long tapes. Logical Methods in Computer Science, vol. 16(2) (2020)
\bibitem[CS]{CS} M. Carl, Philipp Schlicht. Randomness via Infinite Computation and Effective Descriptive Set Theory. Journal of Symbolic Logic, vol. 83(2) (2018)
\bibitem[CSW1]{CSW1} M. Carl, P. Schlicht, P. Welch. Decision Times of Infinite Computations. Notre Dame J. Formal Logic 63(2): 197-212 (2022)
\bibitem[CSW2]{CSW2} M. Carl, P. Schlicht, P. Welch. Countable ranks at the first and second projective levels. Preprint arXiv:2207.08754v1  (2022)
 \bibitem[Cu]{Cu} N. Cutland. Computability. An introduction to recursive function theory. Cambridge University Press (1980)
\bibitem[DHS]{DHS} V. Deolalikar, J. Hamkins, R. Schindler. P$\neq$NP$\cap$co-NP for Infinite Time Turing Machines. Journal of Logic and Computation vol. 15 (2005)
\bibitem[F]{F} T. Fischbach. The Church-Turing Thesis for Ordinal Computable Functions. Master's thesis, University of Bonn (October 2010)
\bibitem[FS]{FS} T. Fischbach, B. Seyfferth. On $\lambda$-definable Functions on Ordinals.  In: P. Bonizzoni, V. Brattka, B. Löwe. (eds) The Nature of Computation. Logic, Algorithms, Applications. CiE 2013. Lecture Notes in Computer Science, vol 7921. Springer, Berlin, Heidelberg
\bibitem[GJH]{GJH} V. Gitman, T. Johnstone, J. Hamkins. What is the theory ZFC without power set? Mathematical Logic Quarterly, vol. 62(4) (2011)
\bibitem[GN]{GN} L. Galeotti, H. Nobrega. Towards Computable Analysis on the Generalised Real Line In: J. Kari et al. (eds).  Unveiling Dynamics and Complexity. Lecture Notes in Computer Science 10307 (2017)
\bibitem[Go]{Go} R. Gostanian. The next admissible ordinal. Annals of Mathematical Logic, vol 17(1-2) (1979)
\bibitem[Ha]{Ha} M. Habic. Cardinal-Recognizing Infinite Time Turing Machines. In: P.  Bonizzoni, V. Brattka, B. L\"owe. (eds) The Nature of Computation. Logic, Algorithms, Applications. CiE 2013. Lecture Notes in Computer Science, vol 7921. Springer, Berlin, Heidelberg. \url{https://doi.org/10.1007/978-3-642-39053-1_27}
\bibitem[HMSW]{HMSW} J. Hamkins, R. Miller, D. Seabold, S. Warner. Infinite time computable model theory. In: S. Cooper et al. (eds.): New Computational Paradigms: Changing Conceptions of Springer (2008) 
  \bibitem[HL]{HL} J. Hamkins, A. Lewis. Infinite Time Turing Machines.  Journal of Symbolic Logic 65(2), 567--604 (2000)
\bibitem[HL1]{HL1} J. Hamkins and A. Lewis, Post's problem for supertasks has both positive and negative solutions. Archive for Mathematical Logic, vol. 41(6) (2002) 
	\bibitem[KS]{KS} P. Koepke, B. Seyfferth. Ordinal machines and admissible recursion theory. Annals of Pure and Applied Logic, vol. 160, pp. 310--318 (2009)
	\bibitem[KS1]{KS1} P. Koepke, B. Seyfferth. Towards a theory of infinite time Blum-Shub-Smale machines. In: S. Cooper et al. (eds.), How the world computes. Turing centenary conference and 8th conference on computability in Europe, CiE 2012, Cambridge, UK, 2012. Proceedings. Springer Berlin. Lecture Notes in Computer Science 7318, pp. 405--415 (2012). 
 \bibitem[K]{K} P. Koepke. Infinite time register machines. In Logical Approaches to Computational Barriers, Arnold Beckmann et al., eds., Lecture Notes in Computer Science 3988 (2006), 257-266
\bibitem[Jaeger]{Jaeger} G. J\"ager. Iterating Admissibility in Proof Theory. In: J. Stern (ed.):  Proceedings of the Herbrand Symposium, Logic Colloquium 1981. North Holland Publishing Company (1982)
\bibitem[K1]{K1} P. Koepke. Ordinal Computability. In Mathematical Theory and Computational Practice. K. Ambos-Spies et al. (eds.), 
Lecture Notes in Computer Science 5635, pp. 280--289 (2009)
\bibitem[K2]{K2} P. Koepke. Turing Computations on Ordinals. The Bulletin of Symbolic Logic, vol. 11(3) (2005)
\bibitem[KM]{KM} P. Koepke, R. Miller. An enhanced theory of infinite time register machines. In Logic and Theory of Algorithms. A. Beckmann et al, eds., Lecture Notes in Computer Science 5028 (2008), 306-315
\bibitem[KSy]{KSy} P. Koepke, R. Syders. Computing the recursive truth predicate on ordinal register machines. In: A. Beckmann et al. (eds.). Logical Approaches to  Computational Barriers. Computer Science Report Series, vol. 7, pp. 160-169 (2006)
\bibitem[M]{M} D. Madore. A Zoo of ordinals. Available online. \url{http://www.madore.org/~david/math/ordinal-zoo.pdf}
\bibitem[ORM]{ORM} P. Koepke, R. Siders. Register computations on ordinals. Archive for Mathematical Logic vol. 47, pp. 529--548 (2008)
\bibitem[ORM1]{ORM1} P. Koepke, R. Siders. Minimality considerations for ordinal computers modeling constructibility. Theoretical Computer Science, vol. 394 (2008)
\bibitem[P]{P} R. Passmann. The first-order logic of CZF is intuitionistic first-order logic. Journal of Symbolic Logic. To appear (appeared online) (2022)
\bibitem[R]{R} B. Rin. The computational strengths of $\alpha$-tape infinite time Turing machines. Annals of Pure and Applied Logic, vol. 165(9) (2014)
\bibitem[W2]{W2} P. Welch. Eventually Infinite Time Turing Degrees: infinite time decidable reals. Journal of Symbolic Logic, vol. 65(3) (2000)
\bibitem[Sa]{Sa} G. Sacks. Higher Recursion Theory. Cambridge University Press (2016)
\end{thebibliography}

\end{document}